\newtheorem{theorem}{Theorem}[section]
\newtheorem{lemma}[theorem]{Lemma}
\newtheorem{conjecture}[theorem]{Conjecture}
\newcommand{\cl}{\hbox{\rm cl}}
\newcommand{\si}{\hbox{\rm si}}
\newcommand{\cB}{\mathcal{B}}
\newcommand{\cC}{\mathcal{C}}
\newcommand{\cA}{\mathcal{A}}
\title{On recognising frame and lifted-graphic matroids}
\author{Rong Chen$^1$, Geoff Whittle$^2$\\
\\
$^1$Center for Discrete Mathematics,\ \ Fuzhou University\\
Fuzhou,\ \ P. R. China\\
$^2$School of Mathematics,\ \ Statistics and Operations Research\\ 
Victoria University of Wellington, New Zealand.}
\begin{document}

\maketitle

\footnote{Mathematics Subject Classification. 05B35. 

Emails: rongchen@fzu.edu.cn (R. Chen),\ \ geoff.whittle@uvw.ac.nz (G. Whittle). 

The projected is supported partially by NSFC (No. 11471076) and a grant from the Marsden Fund of New Zealand.}

\begin{abstract}
We prove that there is no polynomial $p(\cdot)$ with the property that a matroid $M$ can be
determined to be either a lifted-graphic or frame matroid using at most $p(|M|)$
rank evaluations. This resolves two conjectures 
of Geelen, Gerards and Whittle (Quasi-graphic 
matroids, arXiv:1512.03005v1).

{\it Key Words:}  frame matroids, lifted-graphic matroids, quasi-graphic matroids.
\end{abstract}

\section{Introduction}

In \cite{Geelen}, Geelen, Gerards and Whittle make the following conjectures.

\begin{conjecture}\label{Jim-1}
For any polynomial $p(\cdot)$ there is a frame matroid $M$ such that for any set $\cA$ 
of subsets of $E(M)$ with $|\cA|\leq p(|E(M)|)$ there is a non-frame matroid $M'$ such 
that $E(M')=E(M)$ and $r_{M'}(A)=r_M(A)$ for each $A\in\cA$. 
\end{conjecture}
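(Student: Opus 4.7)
The natural approach is via circuit-hyperplane relaxation. Recall that if $H$ is simultaneously a circuit and a hyperplane of a rank-$r$ matroid $M$, then the matroid $M'$ obtained from $M$ by declaring $H$ a basis---the relaxation of $M$ at $H$---satisfies $r_{M'}(H)=r=r_M(H)+1$ and $r_{M'}(A)=r_M(A)$ for every other subset $A$ of $E(M)$. Thus each circuit-hyperplane provides a perturbation of $M$ detectable by a single rank query.

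For each polynomial $p(\cdot)$, the aim is to construct a frame matroid $M$ on $n$ elements containing at least $N>p(n)$ distinct circuit-hyperplanes $H_1,\ldots,H_N$ whose individual relaxations are each non-frame. Given such an $M$, the conjecture follows immediately by pigeonhole: fix any $\cA\subseteq 2^{E(M)}$ with $|\cA|\le p(n)$; since at most $p(n)<N$ of the $H_i$ can lie in $\cA$, some $H_j\notin\cA$, and then the relaxation $M'$ of $M$ at $H_j$ is a non-frame matroid on $E(M)$ agreeing with $M$ on every $A\in\cA$, because the rank functions of $M$ and $M'$ disagree only at $H_j$.

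The substantive work is the construction of $M$. A natural route is via biased graphs, using Zaslavsky's identification of frame matroids with the matroids of biased graphs. One seeks a biased graph $\Omega$ whose underlying graph is dense (for instance $K_m$ with $n=\binom{m}{2}$ edges, or a signed complete graph $\pm K_m$, or the biased graph of a Dowling geometry over a suitable group) so that the frame matroid $M(\Omega)$ houses a super-polynomial family of circuit-hyperplanes arising from tightly-balanced subgraphs. Certifying that each relaxation is non-frame is the principal obstacle; plausible strategies are (i) to exhibit a uniform excluded minor of the class of frame matroids inside every relaxation, or (ii) to argue directly that the relaxation violates the theta-property, so that no biased-graph representation on $E(M)$ can exist. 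Arranging simultaneously that $M$ is frame, that its circuit-hyperplanes are super-polynomial in number, and that all their relaxations escape the frame class---while keeping the rank witness of each perturbation confined to a single subset---is the delicate balance the construction must strike.
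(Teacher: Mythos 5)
Your outer framework---find a frame matroid with super-polynomially many single-set perturbations, each certified non-frame, and conclude by pigeonhole---is exactly the skeleton of the paper's argument, and that part of your write-up is sound. But the proposal stops where the proof actually begins: you name candidate graphs ($K_m$, $\pm K_m$, Dowling geometries), list ``plausible strategies'' for showing the perturbed matroids are non-frame, and explicitly defer the ``delicate balance'' to future work. No family of circuit-hyperplanes is exhibited, no count is verified, and no relaxation is shown to leave the frame class. As it stands this is a research plan, not a proof.

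There is also a substantive reason to doubt the specific operation you chose. For the frame case the paper does \emph{not} relax circuit-hyperplanes; it introduces the reverse operation, \emph{tightening a free basis} (deleting a free basis $B$ from $\cB(M)$, which changes the rank function only at $B$). In the paper's graphs $G_n$ the exponentially many special sets $Z=C_S^1\cup C_S^2$ (covering pairs of disjoint chordless cycles) are circuit-hyperplanes of ${\rm LM}(G_n,\emptyset)$ but \emph{free bases} of ${\rm FM}(G_n,\emptyset)$, so relaxation is simply not available there, and tightening is what produces the non-frame perturbations. Your route would instead require a frame matroid with exponentially many circuit-hyperplanes all of whose relaxations are non-frame; you give no evidence such a matroid exists, and relaxation is known to sometimes preserve frame-ness (e.g.\ relaxing the rim of a wheel yields a whirl, which is still a frame matroid), so the choice of operation is not a harmless detail. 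Finally, the non-frame certification cannot be waved at: the paper carries it out by pinning down what any biased-graph representation $(G',\cB')$ of the tightened matroid would have to look like (it must be $4$-regular and loopless on $2n$ vertices, $G'[Z]$ cannot be a cycle, hence is a theta or handcuff on $2n-1$ vertices, forcing all $2n$ remaining edges onto one vertex---a contradiction). Some argument of that concreteness is needed before the statement is proved.
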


\begin{conjecture}\label{Jim-2}
For any polynomial $p(\cdot)$ there is a lifted-graphic matroid $M$ such that for any 
set $\cA$ of subsets of $E(M)$ with $|\cA|\leq p(|E(M)|)$ there is a non-lifted-graphic 
matroid $M'$ such that $E(M')=E(M)$ and $r_{M'}(A)=r_M(A)$ for each $A\in\cA$. 
\end{conjecture}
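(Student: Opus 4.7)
The plan is an adversary argument built around \emph{circuit-hyperplane relaxation}. If $W$ is a circuit-hyperplane of a matroid $M$, the relaxation $M^{(W)}$ (obtained by declaring $W$ to be a basis) is again a matroid and satisfies $r_{M^{(W)}}(A)=r_M(A)$ for every $A\subseteq E(M)$ with $A\neq W$: for $W\not\subseteq A$ the independent subsets of $A$ are unchanged, for $W\subsetneq A$ both ranks equal $r(M)$ because $W$ is a hyperplane, and only at $A=W$ does the rank jump from $|W|-1$ to $|W|$. Hence a single rank query $A$ can distinguish $M$ from $M^{(W)}$ only when $A=W$.

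This reduces the problem to constructing, for each $n$, a lifted-graphic matroid $M_n$ on a ground set of size $m_n$ together with a family $W_1,\dots,W_N$ of distinct circuit-hyperplanes of $M_n$ such that (i) $N$ grows super-polynomially in $m_n$, and (ii) each relaxation $M_n^{(W_i)}$ is \emph{not} lifted-graphic. Given any query family $\cA$ with $|\cA|\leq p(m_n)<N$, at most $|\cA|$ of the $W_i$ lie in $\cA$, so some $W_i\notin\cA$ survives, and the corresponding $M_n^{(W_i)}$ is a non-lifted-graphic matroid agreeing in rank with $M_n$ on all of $\cA$; this gives Conjecture~\ref{Jim-2}. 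An entirely parallel construction with $F(G_n,\cB_n)$ in place of $L(G_n,\cB_n)$ handles Conjecture~\ref{Jim-1}.

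To produce $M_n$ with many circuit-hyperplanes, I would work inside the biased-graph framework and build $(G_n,\cB_n)$ from a group-labelled dense graph (for instance a gain graph based on $K_n$), arranged so that a super-polynomial collection of short balanced cycles of $G_n$ are simultaneously circuits and hyperplanes of $L(G_n,\cB_n)$. This keeps $|E(M_n)|$ polynomial in $n$ while producing many candidate relaxations, which is essentially a bookkeeping task once the right group labelling is identified.

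The main obstacle is (ii): proving that the relaxations $M_n^{(W_i)}$ escape the lifted-graphic class. Circuit-hyperplane relaxation is a mild local operation under which many matroid classes are closed, so the biased graph must be chosen with care. The combinatorial core of the argument will be a structural lemma of the form ``\emph{if $(G,\cB)$ satisfies condition $(\ast)$ and $W$ is a circuit-hyperplane of a designated type, then $L(G,\cB)^{(W)}$ contains a configuration forbidden in every lifted-graphic biased-graph representation},'' most likely verified either by exhibiting an excluded-minor of the lifted-graphic class inside $M_n^{(W_i)}$ or by deriving a direct contradiction from the circuit/cocircuit structure forced by a hypothetical biased-graph representation of $M_n^{(W_i)}$. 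I expect this step, rather than the counting, to be the technical heart of the proof.
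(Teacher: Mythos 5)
Your overall strategy coincides with the paper's: relax a circuit-hyperplane (so ranks agree everywhere except on that one set), exhibit super-polynomially many circuit-hyperplanes whose relaxations leave the class, and let a counting argument finish the job. The first two paragraphs of your plan are correct and are exactly the skeleton of the published proof. However, what you have written is a plan rather than a proof, and the two places where you defer the work are precisely where the content lies.

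First, the construction you sketch does not obviously produce circuit-hyperplanes. In ${\rm LM}(G,\cB)$ with $G$ connected and containing an unbalanced cycle, the rank is $|V(G)|$, so a hyperplane has rank $|V(G)|-1$; a balanced cycle of length $k$ is a circuit of rank $k-1$, so a ``short'' balanced cycle can never be a hyperplane --- your candidate sets $W_i$ would have to be spanning (e.g.\ Hamiltonian balanced cycles, or something comparable). The paper sidesteps this by taking $\cB=\emptyset$ (no balanced cycles at all) in a specially designed $4$-regular graph $G_n$ on $2n$ vertices, and using as circuit-hyperplanes the edge sets of the $2^{n-1}$ covering pairs of disjoint chordless $n$-cycles; these are spanning by construction. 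Second, and more seriously, you explicitly leave unproved the claim that the relaxations are not lifted-graphic, which you yourself identify as the technical heart. This step genuinely requires the construction to be pinned down: the paper proves it by showing that any framework for the relaxed matroid $M_L^Z$ must be $4$-regular with $2n$ vertices and no loops (a cocircuit-size argument), then showing $G'[Z]$ cannot be a cycle in any biased-graph representation, and deriving a degree contradiction from the fact that $Z$ is a basis of the relaxation. Nothing in your proposal substitutes for this analysis, and since circuit-hyperplane relaxation is, as you note, an operation under which many classes are closed, the conclusion cannot be taken on faith for a generic gain graph over $K_n$. As it stands the proposal identifies the right reduction but leaves both the construction and the key non-representability lemma as open tasks.
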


Put in another way, the conjectures assert that, for matroids given by rank oracles, there
does not exist a polynomial-time algorithm that determines whether a matroid is a frame
matroid or a lifted-graphic matroid. In this paper we give proofs of both of these
conjectures. In fact we prove slightly stronger results, in that we resolve the above
conjectures within the class of quasi-graphic matroids. 

We devote the remainder of the introduction to clarifying the notions considered above.
We begin by recalling material from \cite{Geelen}. 

Let $G$ be a graph and let $M$ be a matroid. For a vertex $v$ of $G$ we let loops$_G(v)$
denote the set of loop-edges of $G$ at the vertex $v$. We say that $G$ is a 
{\em framework} for $M$ if
\begin{enumerate}
\item $E(G)=E(M)$,
\item $r(E(H))\leq |V(H)|$ for each component $H$ of $G$, and
\item for each vertex $v$ of $G$ we have 
$\cl_M(E(G-v))\subseteq  E(G-v)\cup {\rm loops}_G(v)$.
\end{enumerate}
A matroid is {\em quasi-graphic} if it has a framework. A matroid $M$ is a {\em lifted-graphic}
matroid if there is a matroid $M'$ and an element $e\in E(M')$ such that $M'\backslash e=M$ and
$M/e$ is graphic. A matroid $M$ is {\em framed} if it has a basis $V$ such that,
for each element $e\in E(M)$, there is a subset $W$ of $V$ with at most two elements
such that $e\in\cl_M(W)$. A {\em frame matroid} is a restriction of a framed matroid.
It is proved in \cite{Geelen} that frame matroids and lifted-graphic matroids
are quasi-graphic. It is also proved in \cite{Geelen} that if $M$ is a 
representable, 3-connected, quasi-graphic matroid, then $M$ is either a frame matroid
or a lifted-graphic matroid. However, for non-representable matroids this is very 
far from the case. 

Frame matroids and lifted-graphic matroids were introduced by Zaslavsky 
\cite{Zaslavsky-II} from the perspective of biased graphs \cite{Zaslavsky-I}.
We will need that perspective for this paper so we recall material from
\cite{Zaslavsky-I,Zaslavsky-II} now. A {\em theta graph} is a graph that consists of a 
pair of vertices joined by three internally disjoint paths.  A connected 2-regular
graph is a {\em cycle}. A collection ${\mathcal B}$
of cycles of a graph $G$ satisfies the {\em theta property} of no theta subgraph of 
$G$ contains exactly two members of $\mathcal B$. 
A \emph{biased graph} consists of a pair $(G, \mathcal{B})$, where 
$G$ is a graph and $\mathcal{B}$ is a collection of cycles of $G$ satisfying the theta
property. If $(G,{\mathcal B})$ is a biased graph, then the members of 
${\mathcal B}$ are called {\em balanced cycles}, otherwise cycles of $G$ are called 
{\em unbalanced}.

For a set $A$ of edges of a graph $G$, let $G[A]$ be the subgraph with edge set $A$ and
vertex set consisting of all vertices incident with an edge in $A$. Zaslavsky 
\cite{Zaslavsky-II} defines two matroids associated with  
a biased graph $(G,\mathcal B)$.  In the first,
denoted LM$(G,\mathcal B)$, a subset $I$ of $E(G)$ is independent if and only if 
$G[I]$ contains no balanced cycle and at most one cycle. 
In the second, denoted FM$(G,\mathcal B)$, a subset
$I$ of $E(G)$ is independent if and only if $G[I]$ contains no balanced cycle
and every component of $G[I]$ contains at most one cycle. The next theorem follows from
work of Zaslavsky \cite{Zaslavsky-II}.

\begin{theorem}
\label{zas}
Let $M$ be a matroid.
\begin{itemize}
\item[(i)] $M$ is a lifted-graphic matroid if and only if there exists a biased graph 
$(G,\mathcal B)$ such that $M={\rm LM}(G,\mathcal B)$.
\item[(ii)] $M$ is a frame-matroid if and only if there exists a biased graph $(G,\mathcal B)$
such that  $M=FM(G,\mathcal B)$.
\end{itemize}
\end{theorem}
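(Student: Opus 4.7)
The plan is to prove (i) and (ii) each by verifying both implications constructively, following Zaslavsky's framework.

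For the ``if'' direction of (i), I would exhibit an explicit single-element lift of the cycle matroid of $G$ whose deletion is $\mathrm{LM}(G,\mathcal B)$. Take $M'$ to be the set system on $E(G)\cup\{e\}$ in which $I$ is independent iff $G[I\setminus\{e\}]$ contains no balanced cycle and has at most one cycle in total, with the additional proviso that $G[I\setminus\{e\}]$ is a forest whenever $e\in I$. The work is in checking the matroid axioms; augmentation splits into two sub-cases, and the case where the larger independent set contains an unbalanced cycle is handled using the theta property to produce the required element swap. Once $M'$ is a matroid, $M'\setminus e=\mathrm{LM}(G,\mathcal B)$ and $M'/e$ equals the cycle matroid of $G$ by inspection, so $\mathrm{LM}(G,\mathcal B)$ is lifted-graphic.

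For the converse, suppose $M=M'\setminus e$ with $M'/e$ the cycle matroid of a graph $G$ on $E(M)$, and declare a cycle $C$ of $G$ balanced iff $C$ is a circuit of $M'$. To verify the theta property, let $\Theta$ be a theta subgraph with cycles $C_1,C_2,C_3$ satisfying $C_3=C_1\triangle C_2$, and suppose $C_1,C_2$ are balanced. Submodularity applied to the circuits $C_1,C_2$ of $M'$ forces $r_{M'}(E(\Theta))\leq |E(\Theta)|-2$, which already matches $r_{M'/e}(E(\Theta))$; hence $e\notin\cl_{M'}(E(\Theta))$, so $r_{M'}(C_3)=r_{M'/e}(C_3)=|C_3|-1$, and since every proper subset of $C_3$ is independent in $M'/e$ (and therefore in $M'$), $C_3$ must itself be a circuit of $M'$ and hence balanced. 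A routine comparison of circuits then yields $M=\mathrm{LM}(G,\mathcal B)$.

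Part (ii) follows the same template. For the ``if'' direction, I extend $\mathrm{FM}(G,\mathcal B)$ by adjoining a new element for each vertex of $G$ so that these new elements form a basis of the extension and every edge $uv$ lies in the closure of the corresponding pair; this extension is a framed matroid certifying $\mathrm{FM}(G,\mathcal B)$ as a frame matroid, with the theta property again entering in the augmentation-axiom check. For the converse, starting from a framed $M'\supseteq M$ with framing basis $V$, I build a graph on $V$ by assigning each $x\in E(M)$ to the edge $uv$ (or loop at $u$) dictated by the size-$\leq 2$ subset of $V$ whose $M'$-closure contains $x$, declare a cycle balanced iff it is a circuit of $M$, and verify the theta property by the same rank/circuit-elimination argument, now performed inside $M'$ itself. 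The principal obstacle in both parts is establishing the matroid axioms in the ``if'' directions, where the presence or absence of a single unbalanced cycle (for $\mathrm{LM}$) or of extra cycles per component (for $\mathrm{FM}$) changes independence; the theta property is precisely the hypothesis that makes these borderline swaps possible.
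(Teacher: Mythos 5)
The paper does not actually prove this statement: it is quoted as known, with the one\--line attribution that it ``follows from work of Zaslavsky [Zaslavsky-II]'', and it is used downstream only through the explicit descriptions of independent sets of $\mathrm{LM}$ and $\mathrm{FM}$. So there is no in\--paper argument to compare against; what you have done is reconstruct Zaslavsky's own proofs in outline, and the reconstruction is the right one. Your $M'$ in part (i) is exactly Zaslavsky's complete lift matroid (an extra element whose presence forces the graphical part to be a forest), with $M'\setminus e=\mathrm{LM}(G,\mathcal B)$ and $M'/e=M(G)$ immediate once $M'$ is known to be a matroid; your converse for (i), including the submodularity computation showing $e\notin\cl_{M'}(E(\Theta))$ and hence that the third cycle of a theta is a circuit, is sound modulo the trivial degenerate cases where $e$ is a loop or coloop of $M'$. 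Your extension in (ii) is Zaslavsky's full frame matroid, i.e.\ $\mathrm{FM}$ of the graph obtained by adding an unbalanced loop (a joint) at every vertex.

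Three places deserve flagging. First, the matroid\--axiom verifications for the two extensions are the bulk of the cited paper; identifying the theta property as what makes the borderline exchanges work is correct, but that identification is the theorem rather than its proof. Second, in the converse of (ii) you need, before the rank computation can start, that edge sets of forests of the constructed graph are independent in $M'$; this rests on the modular intersection of closures of subsets of the framing basis, $\cl_{M'}(A)\cap\cl_{M'}(B)=\cl_{M'}(A\cap B)$ for $A,B\subseteq V$, and the non\--uniqueness of the set $W$ with $x\in\cl_{M'}(W)$ forces conventions for loops of $M'$ and for elements parallel to basis elements. Third, and most substantively: in the frame case the bound $r_{M'}(E(\Theta))\leq|E(\Theta)|-2$ does not by itself make $C_3=C_1\triangle C_2$ a circuit when the shared branch $C_1\cap C_2$ has two or more edges --- unlike in part (i), there is no contraction $M'/e$ against which to pin the rank of $C_3$ exactly. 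One must instead apply circuit elimination to $C_1,C_2$ across an edge of the shared branch and use the independence of forests to show the resulting circuit contains, and then equals, $C_3$. You name circuit elimination, so the tool is in hand, but ``the same rank argument performed inside $M'$'' does not close this step as stated. None of this is a wrong turn; it is the gap between a faithful outline of Zaslavsky's argument and the full verification the paper delegates to the reference.
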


Finally we note that, if $M={\rm LM}(G,\mathcal B)$ or $M={\rm FM}(G,\mathcal B)$ for
some biased graph $(G,\mathcal B)$, then $G$ is a framework for $M$.

%\begin{conjecture}\label{Jim-1}
%There is no polynomial-time algorithm that, given a matroid $M$ via its rank oracle and a graph $G$, determines whether there is a biased graph $(G,\mathcal{B})$ such that $M=FM(G,\mathcal{B})$.
%\end{conjecture}

%\begin{conjecture}\label{Jim-2}
%There is no polynomial-time algorithm that, given a matroid $M$ via its rank oracle and a graph $G$, determines whether there is a biased graph $(G,\mathcal{B})$ such that $M=LM(G,\mathcal{B})$.
%\end{conjecture}

In \cite{Geelen} it is also conjectured that, unlike lifted-graphic and frame matroids,
the property of being a quasi-graphic matroid can be recognised with a polynomial number
of rank evaluations. Given the results of this paper it is clear that this conjecture
is the more natural extension of a theorem of Seymour \cite{Seymour} where he proves that
graphic matroids can be recognised with a polynomial number of rank evaluations.

\section{Relaxations and tightenings} 

Recall that a {\em circuit-hyperplane} of a matroid $M$ is a set $C$ that is both a circuit
and a hyperplane. It is well known, see for example \cite[Proposition~1.5.14]{Oxley},
that if $C$ is a circuit-hyperplane of $M$, then 
$\mathcal B(M)\cup \{C\}$ is the set of bases of a matroid $M'$. In this case we say that
$M'$ is obtained from $M$ by {\em relaxing} the circuit-hyperplane $C$. 

Relaxation will be an important operation for us, but we will also need the reverse 
operation, which is no doubt well understood, but does not seem to appear in the literature.

Let $B$ be a basis of a matroid $M$. If the closure of each proper subset of $B$ is itself, 
then we say that $B$ is {\sl free}. Observe that, if $B$ is a free basis, then 
for each $e\in B$ and $f\in E(M)-B$ the set $B-e+f$ is a basis of $M$. 

\begin{lemma}\label{intensify}
Let $B$ be a free basis of a matroid $M$. Then $\mathcal{B}(M)-\{B\}$ is the set of bases of a matroid. 
\end{lemma}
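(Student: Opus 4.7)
The plan is to verify the basis exchange axiom directly for $\mathcal{B}':=\mathcal{B}(M)\setminus\{B\}$. Non-emptiness is clear in the only interesting case $E(M)\neq B$: the observation recorded just before the lemma already shows that for any $e\in B$ and $f\in E(M)-B$ the set $B-e+f$ is a distinct basis of $M$, so $\mathcal{B}'\neq\emptyset$. That observation itself follows immediately from freeness, because $\cl_M(B-e)=B-e$ implies $f\notin\cl_M(B-e)$. I will use this consequence of freeness repeatedly.

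Now fix $B_1,B_2\in\mathcal{B}'$ and $x\in B_1\setminus B_2$; I must produce $y\in B_2\setminus B_1$ with $B_1-x+y\in\mathcal{B}'$. Basis exchange in $M$ already yields some $y\in B_2\setminus B_1$ with $B_1-x+y\in\mathcal{B}(M)$, and the only obstacle is the possibility that this basis equals $B$. Suppose it does. Then $B_1-x\subseteq B$, and since $B_1\neq B$ we must have $x\notin B$ (otherwise $B_1\subseteq B$ forces $B_1=B$). Consequently $B_1-x=B-y_0$ for the unique element $y_0\in B\setminus B_1$, and $B_1=B-y_0+x$; the value of $y$ that reconstructs $B$ is forced to be $y_0$.

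The escape from this case uses freeness directly. Since $B$ is free, $\cl_M(B_1-x)=\cl_M(B-y_0)=B-y_0$, so \emph{every} element $y'\notin B-y_0$ has $B_1-x+y'\in\mathcal{B}(M)$. For any $y'\in B_2\setminus B_1$ with $y'\neq y_0$ one checks $y'\notin B$ (otherwise $y'\in B-y_0\subseteq B_1$, contradicting $y'\notin B_1$), and then $y'\notin B-y_0$ and $B_1-x+y'\in\mathcal{B}(M)\setminus\{B\}$, as required. It remains to produce such a $y'$: if instead $B_2\setminus B_1=\{y_0\}$, then $|B_1\setminus B_2|=|B_2\setminus B_1|=1$ gives $B_1\setminus B_2=\{x\}$ and therefore $B_2=B_1-x+y_0=B$, contradicting $B_2\in\mathcal{B}'$.

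The main—really only—subtlety is this last step: the raw basis exchange axiom in $M$ supplies a single $y$, which may accidentally reconstruct $B$, and we must show that some other exchange still lies in $\mathcal{B}'$. Freeness is precisely the hypothesis that makes this work, because it pins $\cl_M(B_1-x)$ down to the small flat $B-y_0$; any larger closure could forbid too many candidates $y'$ and block the argument. From the matroid-theoretic point of view the lemma is simply the inverse of circuit-hyperplane relaxation: the resulting matroid $M'$ with $\mathcal{B}(M')=\mathcal{B}'$ will have $B$ as a circuit-hyperplane, and relaxing $B$ recovers $M$.
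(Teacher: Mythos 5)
Your proof is correct and takes essentially the same route as the paper's: verify basis exchange for $\mathcal{B}(M)-\{B\}$ directly, and when the exchange in $M$ happens to reconstruct $B$, use freeness of $B$ (via $\cl_M(B_1-x)=B_1-x$) to substitute a second element of $B_2\setminus B_1$ that avoids $B$. The differences are only in bookkeeping (your explicit check that such a second element exists matches the paper's observation that $B_2-\{f\}\neq B_1-\{e\}$).
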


\begin{proof}
Let $B_1,B_2\in\mathcal{B}(M)-\{B\}$ and $e\in B_1-B_2$. Then there is $f\in B_2-B_1$ satisfying $B_1-\{e\}+\{f\}\in\cB(M)$. Assume that $B=B_1-\{e\}+\{f\}$. Then $\cl_M(B_1-\{e\})=B_1-\{e\}$. Since $B_2-\{f\}\neq B_1-\{e\}$ as $B_2\neq B$, there is an element $f'\in B_2-(B_1\cup \{f\})$. Moreover, since $\cl_M(B_1-\{e\})=B_1-\{e\}$, we have $B_1-\{e\}+\{f'\}\in\cB(M)-\{B\}$. 
%Evidently, it suffices to show that for each circuit $C$ of $M$ with $B\nsubseteq C$ if $e\in C\cap B$ then the set $(B\cup C)-\{e\}$ contains a circuit of $M$. Since the closure of each proper subset of $B$ is itself and $B\nsubseteq C$, we have $|C-B|\geq2$, say $f, f'\in C-B$. Since $B-e+f$ is a basis of $M$, we have $f'\in\cl_M(B-e+f)$. So the set $(B\cup C)-\{e\}$ contains a circuit of $M$. 
\end{proof}

%\begin{lemma}\label{intensify}
%Let $B$ be a free basis of a matroid $M$. Then $\{B\}\cup\{C\in\mathcal{C}(M)| B\nsubseteq C\}$ is the set of circuits of some matroid. 
%\end{lemma}

%\begin{proof}
%Evidently, it suffices to show that for each circuit $C$ of $M$ with $B\nsubseteq C$ if $e\in C\cap B$ then the set $(B\cup C)-\{e\}$ contains a circuit of $M$. Since the closure of each proper subset of $B$ is itself and $B\nsubseteq C$, we have $|C-B|\geq2$, say $f, f'\in C-B$. Since $B-e+f$ is a basis of $M$, we have $f'\in\cl_M(B-e+f)$. So the set $(B\cup C)-\{e\}$ contains a circuit of $M$. 
%\end{proof} 

We say that the matroid $(E(M), \mathcal{B}(M)-\{B\})$ given in Lemma \ref{intensify} 
is obtained from $M$ by {\sl tightening the free basis} $B$. Evidently, tightening is 
the reverse operation of relaxation. The following results are obvious and will be 
used in the next section without reference. 

\begin{lemma}
Let $M'$ be a matroid obtained from a matroid $M$ by tightening a free basis $B$ . Then $r_{M'}(X)=r_M(X)$ for each $B\neq X\subseteq E(M)$. 
\end{lemma}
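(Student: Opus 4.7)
The plan is to establish two inequalities: $r_{M'}(X) \leq r_M(X)$ and $r_{M'}(X) \geq r_M(X)$, for any $X \subseteq E(M)$ with $X \neq B$. The first direction is immediate: since $\cB(M') = \cB(M) - \{B\} \subseteq \cB(M)$, every independent set of $M'$ is independent in $M$, so ranks can only drop.

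For the reverse inequality, I would fix $X \neq B$ and pick a maximum independent subset $I \subseteq X$ of $M$, so $|I| = r_M(X)$. The goal is to produce an independent set of $M'$ in $X$ of size $|I|$. I would split on whether $I = B$. If $I \neq B$, then I would show $I$ is contained in some basis of $M$ other than $B$, so $I$ itself is independent in $M'$. If $I \not\subseteq B$ this is automatic, because any basis of $M$ containing $I$ must differ from $B$. If $I \subsetneq B$, I would pick $e \in B - I$ and $f \in E(M) - B$ (which is nonempty, otherwise $B$ would be the unique basis of $M$ and the tightening would not yield a matroid); by freeness of $B$ we have $\cl_M(B - \{e\}) = B - \{e\}$, which forces $B - \{e\} + \{f\}$ to be a basis of $M$ different from $B$ and containing $I$. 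Hence $I$ is independent in $M'$ in both subcases.

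The remaining case is $I = B$, which requires $B \subseteq X$, and since $X \neq B$ we then have $X \supsetneq B$, so we may pick $f \in X - B$. Choosing any $e \in B$, the set $B - \{e\} + \{f\}$ is again a basis of $M$ by freeness of $B$, it is contained in $X$, and it is distinct from $B$, so it is a basis of $M'$ inside $X$; this gives $r_{M'}(X) \geq |B| = r_M(X)$.

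The argument is essentially bookkeeping once freeness is unpacked. The only mild subtlety — the likely obstacle in a sloppy write-up — is the case $I = B$, where $I$ itself is no longer independent in $M'$; here we genuinely need the hypothesis $X \neq B$ to find a witness $f \in X - B$ that lets a basis-exchange produce a different basis of $M$ lying in $X$.
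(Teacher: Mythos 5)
Your proof is correct. Note that the paper itself offers no argument for this lemma (it is declared ``obvious'' and used without reference), so there is nothing to diverge from; your write-up supplies the natural missing details, and the case analysis is sound --- in particular you correctly isolate the one genuinely nontrivial case, namely a maximum $M$-independent subset $I$ of $X$ with $I=B$, where the hypothesis $X\neq B$ is needed to find $f\in X-B$ and exchange into a basis of $M'$ inside $X$. Your basis-exchange step $B-\{e\}+\{f\}\in\cB(M)$ is exactly the observation the paper records immediately after defining free bases, so your argument is the intended one.
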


\begin{lemma}
Let $M'$ be a matroid obtained from a matroid $M$ by relaxing a circuit-hyperplane $C$ . Then $r_{M'}(X)=r_M(X)$ for each $C\neq X\subseteq E(M)$. 
\end{lemma}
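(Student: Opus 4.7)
The plan is to use the basis description $\mathcal{B}(M') = \mathcal{B}(M)\cup\{C\}$ directly, together with the formula $r_N(X) = \max\{|X\cap B|:B\in\mathcal{B}(N)\}$ valid in any matroid $N$. Because every basis of $M$ is also a basis of $M'$, one inequality is free: $r_{M'}(X)\geq r_M(X)$ for every $X\subseteq E(M)$. So the real content is the reverse inequality $r_{M'}(X)\leq r_M(X)$ whenever $X\neq C$.

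I would argue this by contradiction. Suppose $r_{M'}(X)>r_M(X)$ for some $X\neq C$. The bases of $M$ alone witness only $r_M(X)$, so the maximum of $|X\cap B|$ over $\mathcal{B}(M')$ must be attained at the extra basis $C$, giving $|X\cap C|>r_M(X)$. I then split into two cases according to how $X$ meets $C$, and in each case derive a contradiction from one of the two defining properties of a circuit-hyperplane.

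In the first case, $X\cap C\subsetneq C$: then $X\cap C$ is a proper subset of the circuit $C$, hence independent in $M$, so $|X\cap C| = r_M(X\cap C)\leq r_M(X)$, contradicting the strict inequality above. In the second case, $C\subseteq X$: since $X\neq C$ we actually have $C\subsetneq X$, so there exists $e\in X\setminus C$, and because $C$ is a hyperplane of $M$ we have $e\notin\cl_M(C)$, hence $r_M(X)\geq r_M(C\cup\{e\})=r(M)$. But $M$ and $M'$ have the same ground set and same rank, so $r_{M'}(X)\leq r(M')=r(M)=r_M(X)$, again contradicting the supposition.

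There is no real obstacle here; the only point requiring care is to remember to invoke both defining features of $C$ (the circuit property in the first case, the hyperplane property in the second). The argument is parallel in spirit to the preceding statement about tightening, and together they make precise the intuition that relaxation changes the rank function only at the relaxed set.
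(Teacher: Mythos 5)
Your proof is correct: the basis formula $r_N(X)=\max\{|X\cap B|:B\in\mathcal{B}(N)\}$, the observation that any increase must be witnessed by the new basis $C$, and the two-case analysis (using the circuit property when $X\cap C\subsetneq C$ and the hyperplane property when $C\subsetneq X$) all check out, including the implicit fact that $r(M')=r(M)$ because $|C|=r(M)$. The paper states this lemma without proof, declaring it obvious, so your argument simply supplies the standard verification that would be expected.
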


%Therefore, for a matroid $M$ with many basis-circuits, Lemmas \ref{intensify} and \ref{keep-basis-circuit} imply that by a series of intensifying operations we can get a new matroid from $M$. For circuit-hyperplanes, we have a similar result as Lemma \ref{keep-basis-circuit}. Hence, for a matroid with many circuit-hyperplanes, by a series of relaxing operations we can also get a new matroid from the original one. In the proofs of Conjectures \ref{Jim-1} and \ref{Jim-2}, we will construct a class of biased graphs $(G,\emptyset)$ such that their frame matroids have many basis-circuits and their lift matroids have many circuit-hyperplanes. 

\section{Proof of the main theorems} 

In this section we give proofs of the following theorems, which, as observed earlier,
are slight strengthenings of Conjectures~\ref{Jim-1} and \ref{Jim-2}.

\begin{theorem}
\label{1}
For any polynomial $p(\cdot)$ there is a frame matroid $M$ such that for any set $\cA$ 
of subsets of $E(M)$ with $|\cA|\leq p(|E(M)|)$ there is a quasi-graphic
non-frame matroid $M'$ such 
that $E(M')=E(M)$ and $r_{M'}(A)=r_M(A)$ for each $A\in\cA$. 
\end{theorem}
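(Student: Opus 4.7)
The overall strategy is to exhibit, for each large $n$, a single frame matroid $M_n$ together with a super-polynomially large family $\{C_1,\ldots,C_N\}$ of circuit-hyperplanes of $M_n$ such that relaxing any one $C_i$ produces a quasi-graphic matroid $M_n^{(i)}$ that is not a frame matroid. Given any polynomial $p(\cdot)$, we then choose $n$ so that $N>p(|E(M_n)|)$. For any family $\cA$ of subsets of $E(M_n)$ with $|\cA|\leq p(|E(M_n)|)$, at least one $C_i$ lies outside $\cA$; by the rank-preservation property of relaxations established above, the matroid $M'=M_n^{(i)}$ satisfies $r_{M'}(A)=r_{M_n}(A)$ for every $A\in\cA$ while being quasi-graphic and non-frame.

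For the construction, we realise $M_n$ as $\mathrm{FM}(G_n,\cB_n)$ for a carefully designed biased graph whose frame matroid has many spanning substructures of a single circuit type. The guiding idea is to take $G_n$ to be built from many copies of a small biased gadget glued along a common spine, so that an independent binary choice inside each gadget selects one of two spanning circuits of $\mathrm{FM}(G_n,\cB_n)$ (for instance, an unbalanced theta subgraph, or a tight or loose handcuff). Each such choice yields a distinct circuit-hyperplane $C_i$ of $\mathrm{FM}(G_n,\cB_n)$. With $k$ gadgets we obtain at least $2^k$ circuit-hyperplanes while $|E(G_n)|$ grows only linearly in $k$, giving the required super-polynomial count. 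One verifies that each $C_i$ is indeed a circuit-hyperplane via Zaslavsky's classification of frame-matroid circuits together with a rank computation using the framework $G_n$.

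Two things then need to be checked for each $C_i$. First, $G_n$ remains a framework for $M_n^{(i)}$: the three framework conditions are local, and since relaxation alters rank only on $C_i$ itself, only the closure axiom at vertices whose incident edges interact with $C_i$ is non-routine. Second, and this is the main obstacle, $M_n^{(i)}$ is not a frame matroid. Assume for contradiction that $M_n^{(i)}=\mathrm{FM}(G',\cB')$ for some biased graph. The framework-rigidity results for quasi-graphic matroids developed in \cite{Geelen} should constrain $G'$ to be combinatorially close to $G_n$; but then the fact that $C_i$ is a basis of $M_n^{(i)}$ is incompatible with the classification of frame-matroid circuits, since $C_i$ was designed to realise precisely one of those circuit patterns inside $G_n$.

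The principal difficulty is the last step: ruling out every biased-graph representation of $M_n^{(i)}$, not merely the natural one inherited from $G_n$. Exhibiting a framework is routine, but excluding the entire class of frame matroids requires combining framework rigidity with the frame-circuit classification in a way that is sensitive to the single altered basis $C_i$. Designing $G_n$ so that this rigidity argument succeeds while simultaneously supplying super-polynomially many circuit-hyperplanes is the delicate core of the proof.
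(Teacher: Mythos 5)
Your outer counting argument---exponentially many ``special'' sets altering the matroid one at a time, versus polynomially many rank queries---is exactly the paper's skeleton, and the observation that the modification changes the rank of only the one altered set is the right rank-preservation lemma. But the core of your proposal has a genuine gap, and for the frame case it starts from the wrong operation. You propose to relax circuit-hyperplanes of a frame matroid, and you suggest realising them as spanning circuits (unbalanced thetas, tight or loose handcuffs). These cannot be circuit-hyperplanes: a circuit-hyperplane must have exactly $r(M)$ elements, whereas a theta or handcuff circuit on a vertex set $W$ has $|W|+1$ edges, so if it covers all vertices it is a spanning circuit of full rank, not a hyperplane. This is precisely why the paper does something different for Theorem~\ref{1}: in ${\rm FM}(G_n,\emptyset)$ the natural exponential family---edge sets $Z=C\cup C'$ of covering pairs of disjoint chordless cycles---consists of \emph{free bases} (each component of $G_n[Z]$ contains exactly one cycle, so $Z$ is independent and spanning), and the paper introduces the reverse operation of relaxation, \emph{tightening} a free basis (Lemma~\ref{intensify}), to delete $Z$ from the list of bases. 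Relaxation is reserved for the lift matroid, where the same sets $Z$ \emph{are} circuit-hyperplanes. Without this switch your construction has no candidate sets to modify.

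The second, acknowledged, gap is the non-frameness of the modified matroid. You defer this to unspecified ``framework-rigidity results'' from \cite{Geelen}, but the paper does not use any such results; it argues directly. The concrete graph $G_n$ is $4$-regular on $2n$ vertices (the union of four spanning cycles), and a cocircuit-size count shows that \emph{any} framework for the modified matroid must be $4$-regular and loopless on $2n$ vertices (Lemma~\ref{G'}). One then shows that in a putative representation $M_F^Z={\rm FM}(G',\cB')$ the set $Z$, now a circuit-hyperplane, cannot induce a cycle in $G'$ (Lemma~\ref{G'|F-F}), hence induces a theta or handcuff on $2n-1$ vertices; since $Z$ is a flat, all $2n$ remaining edges must be incident with the single uncovered vertex, contradicting $4$-regularity. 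Designing the graph so that both the exponential count of free bases and this degree contradiction go through simultaneously is the actual content of the proof, and it is exactly the part your proposal leaves open.
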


\begin{theorem}
\label{2}
For any polynomial $p(\cdot)$ there is a lifted-graphic matroid $M$ such that for any 
set $\cA$ of subsets of $E(M)$ with $|\cA|\leq p(|E(M)|)$ there is a 
quasi-graphic non-lifted-graphic 
matroid $M'$ such that $E(M')=E(M)$ and $r_{M'}(A)=r_M(A)$ for each $A\in\cA$. 
\end{theorem}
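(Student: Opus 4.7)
The plan is to mirror the outline of Theorem~\ref{1} in the lift setting: construct a lifted-graphic matroid $M$ with super-polynomially many free bases (relative to $|E(M)|$) so that tightening any one of them yields a quasi-graphic matroid that is not lifted-graphic. By the rank-preservation lemma for tightening, $M$ and each of its tightenings agree in rank on every subset except the single tightened set, so any query collection $\cA$ of polynomial size must fail to separate $M$ from some tightening.

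For the construction I would take $G=K_n$ with $n$ large (depending on the polynomial $p$), fix a triangle $T$ of $K_n$, and let $M={\rm LM}(K_n,\{T\})$; since $|\{T\}|=1$, the theta property for $(K_n,\{T\})$ holds trivially, so $M$ is lifted-graphic. The free bases I focus on are Hamilton cycles $H$ of $K_n$ satisfying (i) the three vertices of $T$ do not appear in three consecutive positions along $H$, and (ii) $H$ and $T$ share at least one edge. For any $e\in H$ and $f\notin H$, the unique cycle of $H-e+f$ is the triangle $T$ precisely when the three vertices of $T$ lie consecutively on $H$; so (i) is equivalent to every $H-e+f$ being independent, i.e., to $H$ being a free basis of $M$. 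A straightforward inclusion-exclusion count yields $3(n-4)(n-3)!$ Hamilton cycles satisfying both (i) and (ii) for $n\geq 5$, and this grows super-polynomially in $|E(M)|=\binom{n}{2}$.

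Now fix such an $H$ and let $M_H$ denote the tightening of $M$ at $H$. For $n\geq 5$ the cardinality $|H|=n$ differs from each of $|E(K_n)|=\binom{n}{2}$, $|E(K_n-v)|=\binom{n-1}{2}$, and $|E(K_n-v)|+1$, so every rank evaluation used in the three framework conditions carries over from $M$ to $M_H$; hence $K_n$ remains a framework for $M_H$, and $M_H$ is quasi-graphic. To show that $M_H$ is not lifted-graphic, suppose for a contradiction that $M_H={\rm LM}(G',\cB')$ for some biased graph $(G',\cB')$. Using the uniqueness of frameworks for sufficiently connected quasi-graphic matroids from \cite{Geelen}, together with verification that $M$ and $M_H$ are $3$-connected, one forces $G'$ to be isomorphic to $K_n$. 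The circuits of $M_H$ are exactly the circuits of $M$ together with $\{H\}$, so the cycles of $K_n$ that are circuits of $M_H$ are precisely $T$ and $H$; since in a lift representation the balanced cycles are exactly the cycles that are circuits, $\cB'=\{T,H\}$. But by (ii), $T$ and $H$ share an edge, so $T\cup H$ is the edge set of a theta subgraph of $K_n$ whose three cycles are $T$, $H$, and a single additional cycle of length $n+1$; two of these lie in $\cB'$ and one does not, violating the theta property of biased graphs.

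The main obstacle is the framework-uniqueness step: showing that any biased-graph representation of $M_H$ must have underlying graph $K_n$ up to the natural equivalences of biased graphs. This relies on the structural theory of quasi-graphic matroids from \cite{Geelen} and requires verifying the requisite connectivity hypotheses on $M$ and $M_H$. The inclusion-exclusion count for free Hamilton cycles satisfying (i) and (ii), and the framework-preservation check for $M_H$, are comparatively routine.
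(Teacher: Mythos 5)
Your construction fails at its core: the tightened matroid $M_H$ is still lifted-graphic. Tightening the free basis $H$ of $M={\rm LM}(K_n,\{T\})$ simply declares the cycle $H$ balanced: one checks directly that a set $I$ is independent in $M_H$ if and only if $K_n[I]$ has at most one cycle and no cycle of $K_n[I]$ lies in $\{T,H\}$ (the only independent set of $M$ containing $H$ and having at most one cycle is $H$ itself), so $M_H={\rm LM}(K_n,\{T,H\})$ provided $\{T,H\}$ satisfies the theta property. It does. Your conditions (i) and (ii) force $|E(T)\cap E(H)|=1$, say $E(T)\cap E(H)=\{xy\}$ with $T=xyz$; if a theta subgraph had both $T$ and $H$ among its three cycles, the shared branch path would have to be the single edge $xy$, making the other two branch paths $xzy$ and $H-xy$ --- but these both pass through $z$ internally, so no such theta subgraph exists. (This also exposes the error in your final step: $T\cup H$ is \emph{not} a theta subgraph, since $z$ has degree $4$ in it.) Hence every one of your matroids $M_H$ is lifted-graphic by Theorem~\ref{zas}(i), and no choice of a single balanced cycle can repair this: tightening a free basis that is a cycle of the underlying graph always just enlarges the balanced class. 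This is precisely why the paper works with sets $Z=C\cup C'$ that are disjoint unions of two covering chordless cycles --- such a $Z$ is a circuit-hyperplane of ${\rm LM}(G_n,\emptyset)$ (so the paper \emph{relaxes} rather than tightens in the lift case), and the modification cannot be absorbed into a re-biasing of the same graph.

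Separately, even where your outline is structurally sound, the step "use the uniqueness of frameworks for sufficiently connected quasi-graphic matroids to force $G'\cong K_n$" is unsupported: no such uniqueness theorem is available in the form you need (3-connected frame and lift matroids can have inequivalent biased-graph representations, and any uniqueness statement in \cite{Geelen} that excludes lifted-graphic matroids cannot be invoked while you are assuming, for contradiction, that $M_H$ is lifted-graphic). The paper replaces this appeal with a concrete argument: a cocircuit/degree count (Lemma~\ref{G'}) showing any framework for the modified matroid is $4$-regular on $2n$ vertices, followed by an analysis of how the set $Z$ can sit inside such a graph (Lemmas~\ref{G'|F-L} and \ref{non-lifted}). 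Any correct proof along your lines would need an analogous explicit argument pinning down the putative representing graph.
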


Let $G$ be a graph. Recall that a cycle $C$ is {\em chordless} if there is no
edge in $E(G)-C$ that joins vertices of $C$. Two cycles are {\em disjoint}
if they do not share a common vertex. We say that the pair $C$, $C'$ of cycles is a 
{\em covering pair} if every vertex of $G$ is in either $C$ or $C'$. Our interest
will focus on covering pairs of disjoint chordless cycles. 

For the remainder of this paper we focus on biased graphs all of whose cycles are 
unbalanced, that is, biased graphs of the form $(G,\emptyset)$. 
We omit the elementary proofs of the next two lemmas. 

\begin{lemma}
\label{easy1}
Let $G$ be a graph and let 
$\{C,C'\}$ be a covering pair of disjoint chordless cycles of $G$. Then the following hold.
\begin{itemize}
\item[(i)] $C\cup C'$ is a circuit-hyperplane of 
the lift matroid ${\rm LM}(G,\emptyset)$.
\item[(ii)] $C\cup C'$ is a free basis of 
the frame matroid ${\rm FM}(G,\emptyset)$. 
\end{itemize}
\end{lemma}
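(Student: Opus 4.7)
The plan is to leverage two elementary graph-theoretic observations and then translate them into rank statements in each matroid. Since $C$ and $C'$ are vertex-disjoint and together cover $V(G)$, we have $|C \cup C'| = |V(C)| + |V(C')| = |V(G)|$. For any $f \in E(G) \setminus (C \cup C')$, both endpoints of $f$ lie in $V(C) \cup V(C')$ by the covering-pair hypothesis, and the chordless hypothesis prevents them from both lying inside $V(C)$ or both inside $V(C')$; hence $f$ has exactly one endpoint in $V(C)$ and one in $V(C')$. Throughout I work in the intended setting in which $G$ is connected, so that $\mathrm{LM}(G,\emptyset)$ and $\mathrm{FM}(G,\emptyset)$ both have rank $|V(G)|$.

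For (i), I first verify that $C \cup C'$ is a circuit of the lift matroid: the subgraph with edge set $C \cup C'$ contains the two cycles $C$ and $C'$, so it is dependent, yet deleting any single edge leaves a subgraph with exactly one cycle (the other one), which is independent in $\mathrm{LM}(G,\emptyset)$. This gives $r_{\mathrm{LM}}(C \cup C') = |V(G)| - 1$, one less than the full rank, so $C \cup C'$ will be a hyperplane once it is shown to be closed. For any $f \in E(G)\setminus(C \cup C')$, the observation above lets me delete an edge of $C$ and adjoin $f$: the resulting $|V(G)|$-edge subgraph contains only the cycle $C'$, hence is an independent set of size $|V(G)|$. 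Therefore $r_{\mathrm{LM}}((C\cup C')\cup\{f\}) = |V(G)|$, so $f \notin \cl_{\mathrm{LM}}(C \cup C')$.

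For (ii), I first note that $C \cup C'$ is independent in $\mathrm{FM}(G,\emptyset)$ because the induced subgraph has components $C$ and $C'$, each a single cycle; since $|C \cup C'| = |V(G)|$ equals the rank, it is a basis. A direct unpacking of the freeness definition shows that freeness of $B = C \cup C'$ reduces to proving that $(C \cup C')\setminus\{e\}\cup\{f\}$ is a basis for every $e \in C \cup C'$ and every $f \in E(G)\setminus(C \cup C')$. Taking $e \in C$ (the case $e \in C'$ is symmetric), the resulting $|V(G)|$-edge subgraph is connected---$C \setminus \{e\}$ is a spanning path on $V(C)$, $C'$ is a spanning cycle on $V(C')$, and $f$ joins them---and contains exactly one cycle, so it is independent in the frame matroid and hence a basis.

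I expect no serious obstacle: the whole argument is a small case analysis built around the observation that chordlessness plus the covering-pair condition force every edge outside $C \cup C'$ to run between the two cycles, after which the lift and frame independence criteria applied to $(G,\emptyset)$ do the rest.
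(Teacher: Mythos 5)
The paper explicitly omits the proof of this lemma as ``elementary,'' so there is no argument of the authors' to compare against; your proof is correct and fills the gap in the intended way. The crux --- that chordlessness plus the covering property force every edge of $E(G)-(C\cup C')$ to have exactly one endpoint on each cycle, after which the independence criteria for ${\rm LM}(G,\emptyset)$ and ${\rm FM}(G,\emptyset)$ give the circuit-hyperplane and free-basis conclusions directly --- is exactly right, and your reduction of freeness to the statement that $B-e+f$ is a basis for all $e\in B$, $f\notin B$ is a valid (if unproved here) standard equivalence.
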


\begin{lemma}
\label{easy2}
Let $G$ be a graph and let $\{C,C'\}$ be a covering pair of disjoint chordless cycles of
$G$. Then the following hold.
\begin{itemize}
\item[(i)] $G$ is a framework for the matroid obtained by relaxing the circuit-hyperplane
$C\cup C'$ of ${\rm LM}(G,\emptyset)$.
\item[(ii)] $G$ is a framework for the matroid obtained by tightening the free basis
$C\cup C'$ of ${\rm FM}(G,\emptyset)$.
\end{itemize}
\end{lemma}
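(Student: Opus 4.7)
The plan is to verify the three framework axioms for the modified matroid by reducing to the fact, noted at the end of Section~1, that $G$ is already a framework for both ${\rm LM}(G,\emptyset)$ and ${\rm FM}(G,\emptyset)$. Write $D = C \cup C'$; let $M$ denote either ${\rm LM}(G,\emptyset)$ (in part (i)) or ${\rm FM}(G,\emptyset)$ (in part (ii)), and let $M'$ be the matroid obtained from $M$ by relaxing, respectively tightening, $D$. The two preceding rank-preservation lemmas assert that $r_{M'}(X) = r_M(X)$ for every $X \neq D$, so the strategy is simply to show that none of the sets we need to control actually equals $D$.

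Axiom~(1) is immediate, since the ground sets agree. For axiom~(2), $D$ is the edge set of two vertex-disjoint cycles, so $G[D]$ is disconnected; any component $H$ of $G$ is connected, so $E(H) \neq D$, and the inequality $r_M(E(H)) \leq |V(H)|$ transfers to $M'$ unchanged.

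Axiom~(3) is the main obstacle, and where the covering-pair hypothesis is used in an essential way. Fix a vertex $v$ of $G$. Because $\{C,C'\}$ covers $V(G)$ and $C, C'$ are vertex-disjoint, $v$ lies on precisely one of $C$ or $C'$; since a cycle is $2$-regular at each of its vertices, $v$ is incident to exactly two edges of $D$. In particular $D \not\subseteq E(G-v)$; moreover, for any edge $e$ incident to $v$, at least one edge of $D$ at $v$ distinct from $e$ is still absent from $E(G-v)\cup\{e\}$, so $D \not\subseteq E(G-v)\cup\{e\}$ either. The rank-preservation lemmas now yield
\[
r_{M'}(E(G-v)) = r_M(E(G-v)) \qquad \text{and} \qquad r_{M'}(E(G-v)\cup\{e\}) = r_M(E(G-v)\cup\{e\})
\]
for every $e \notin E(G-v)$. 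Hence $\cl_{M'}(E(G-v)) = \cl_M(E(G-v)) \subseteq E(G-v) \cup {\rm loops}_G(v)$, and axiom~(3) is inherited from $M$.

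The same argument handles both (i) and (ii); the entire substantive content is the two-edge count at $v$, which is what the ``covering pair of disjoint chordless cycles'' hypothesis buys us. (Chordlessness itself is not needed here; it was used in Lemma~\ref{easy1} to make $D$ a circuit-hyperplane, respectively free basis, in the first place.)
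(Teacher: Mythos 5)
The paper omits the proof of this lemma as elementary, and your argument is correct and surely the intended one: each framework axiom for the modified matroid reduces to the corresponding axiom for ${\rm LM}(G,\emptyset)$ or ${\rm FM}(G,\emptyset)$ via the rank-preservation lemmas of Section~2, with the covering-pair hypothesis guaranteeing that neither $E(H)$, nor $E(G-v)$, nor $E(G-v)\cup\{e\}$ can equal $C\cup C'$. Your observation that chordlessness plays no role here (it is needed only in Lemma~\ref{easy1}) is also accurate.
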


Next we define the family of graphs that we will use to prove Theorems~\ref{1} and \ref{2}.

Let $n\geq4$ be an even number. Let $G_n$ be the graph with 
$V(G_n)=\{u_1,u_2,\ldots,u_n,\ v_1,\\ v_2,\ldots, v_n\}$ and 
$E(G_n)=E(C_1\cup C_2\cup C_3\cup C_4)$, where 
\[\begin{aligned}
C_1&=u_1u_2\ldots u_nu_1,\\ 
C_2&=v_1v_2\ldots v_nv_1,\\
C_3&=u_1v_1u_3v_3u_5\ldots v_{n-3}u_{n-1}v_{n-1}u_1,\\
C_4&=u_2v_2u_4v_4u_6\ldots v_{n-2}u_{n}v_{n}u_2.
\end{aligned}\]
are cycles of $G_n$. See Figure 1.  %Evidently, 
%\begin{itemize}
%\item[(3)] All theta-subgraphs and hancuffs of $G$ contain at least seven edges. 
%\end{itemize}
For each integer $1\leq i\leq n$, set $e_i=u_iv_i$ and $f_{i,i+2}=v_iu_{i+2}$, where the subscripts are modulo $n$. 
\begin{figure}[htbp]
\begin{center}
\includegraphics[page=1,height=4.5cm]{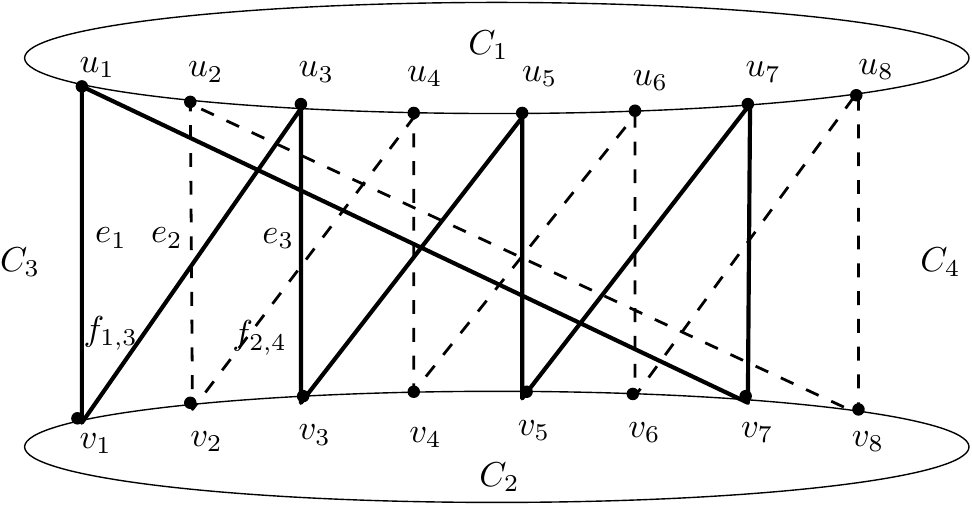}
\caption{the graph $G_8$.}
%\label{Figure 1}
\end{center}
\end{figure}
We say that $(e_{i+1}, f_{i, i+2})$ is a {\sl crossing pair}. 
Note that the two edges contained in a crossing pair are disjoint and each edge in 
$C_3\cup C_4$ belongs to exactly one crossing pair. 
Let $X$ be the set whose elements are the crossing pairs of $G_n$. Let $X'$ be the set 
of all subsets of $X$ with even size. Observe that  $|X|=n$, we have $|X'|=2^{n-1}$. 
%For any $S\in CP'$  there is  a unique pair of disjoint cycles $C_S^1$ and $C_S^2$ determined by $S$ with $E(S)\subseteq E(C_S^1\cup C_S^2)$. 
Let $$S=\{(e_{i_1+1}, f_{i_1, i_1+2}), (e_{i_2+1}, f_{i_2, i_2+2}),\ldots, (e_{i_{2k}+1}, f_{i_{2k}, i_{2k}+2})\}$$ 
be an element in $X'$ with $1\leq i_1<i_2<\ldots<i_{2k}\leq n$. 
There is  a unique pair of disjoint cycles $C_S^1$ and $C_S^2$ 
such that the set of edges in crossing pairs in $S$
is equal to $E(C_3\cup C_4)\cap E(C_S^1\cup C_S^2)$ and with 
\[\begin{aligned}
\{e_{i_1+1}, f_{i_2,i_2+2}, e_{i_3+1}, \ldots, f_{i_{2k}, i_{2k}+2}\}\subset C_S^1, \\
\{f_{i_1,i_1+2}, e_{i_2+1}, f_{i_3,i_3+2}, \ldots, e_{i_{2k}+1}\}\subset C_S^2.
\end{aligned}\]
See Figure 2. 
\begin{figure}[htbp]
\begin{center}
\includegraphics[page=2,height=4.5cm]{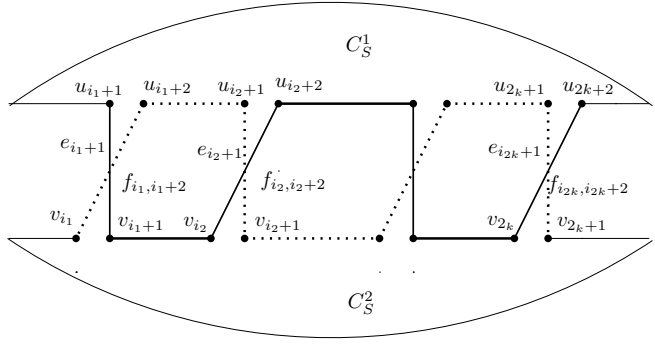}
\caption{the disjoint cycles $C_S^1$ and $C_S^2$.}
%\label{Figure 2}
\end{center}
\end{figure}
For example, when $S$ contains all crossing pairs, $C_S^1=C_3, C_S^2=C_4$. 
When $S=\emptyset$, 
we have $C_S^1=C_1, C_S^2=C_2$. 

It follows from routine inspection that,
for each $S\in X'$, the pair $\{C_S^1,C_S^2\}$ is a covering pair of
disjoint chordless cycles of $G_n$ and $C_S^1,C_S^2$ are of the same length $n$. 
Let $\mathcal Z$ be the set of edge sets of 
all covering pairs of disjoint chordless cycles obtained from $X'$ in the above way.
Then $|\mathcal Z|=2^{n-1}$. By Lemma~\ref{easy2} each member of $\mathcal Z$ is a circuit
hyperplane of LM$(G_n,\emptyset)$ and a free basis of FM$(G_n,\emptyset)$. 

To distinguish $LM(G_n,\emptyset)$ from the matroid obtained by relaxing a member $Z$ of
$\mathcal Z$ we need to check the rank of $Z$ and to distinguish 
$FM(G_n,\emptyset)$ from the matroid obtained by tightening $Z$ we also need to check
the rank of $Z$. To distinguish LM$(G_n,\emptyset)$ or FM$(G_n,\emptyset)$ from all such
matroids we need to check the rank of $2^{n-1}$ subsets. Evidently 
$2^{n-1}$ outgrows any polynomial function.

%Evidently, $\{C_S^1, C_S^2\}$  of $G$. Let $\cF$ be the set consisting of all such spanning pair of disjoint-cycles flat of $G$. Since $|CP'|=2^{n-1}$, we have $|\cF|=2^{n-1}$. 
%\begin{itemize}
%\item[(6)] For each spanning pair of disjoint cycles-hyperplane $X_S$ determined by $S$, 
%\end{itemize}

%Let $\cA$ be a set of subsets of $E(G)$ with $|\cA|\leq p(|E(G)|)=p(4n)$, where $p(\cdotp)$ is a polynomial function. Let $\cH=\cF-\cA$. Since $|\cF|=2^{n-1}$, we have $\cH\neq\emptyset$. Let $M_F$ be the matroid obtained from $FM(G,\emptyset)$ by tightingen all elements in $\cH$, and $M_L$ be the matroid obtained from $LM(G,\emptyset)$ by relaxing all elements in $\cH$. Evidently, 
%\[\begin{aligned}
%r(M_F)&=r(FM(G,\emptyset)),\ \ \ r(M_L)=r(LM(G,\emptyset)), \\
%r_{M_F}(A)&=r_{FM(G,\emptyset)}(A),\ r_{M_L}(A)=r_{LM(G,\emptyset)}(A)\ \text{for\ each}\ A\in\cA. 
%\end{aligned}\]

Say $Z\in \mathcal Z$. Let $M_L=LM(G_n,\emptyset)$ and let $M_F=FM(G_n,\emptyset)$.
Let $M_L^Z$ and $M_F^Z$ be the matroids obtained from $M_L$ and $M_F$ by respectively
relaxing and tightening $Z$. To complete the proof of Theorems~\ref{1} and \ref{2}
it suffices to show that $M_L^Z$ is not a lift matroid and $M_F^Z$ is not a frame matroid.
We now turn attention to this task. 

%Hence, to finish the proof of Conjectures , it suffices to show that $M_F$ is a non-frame matroid and $M_L$ is a non-lifted-graphic matroid. Next, we prove this.  

%Since each vertex of $G$ is adjacent with exactly two edges of a circuit-hyperplane of $G$, it is easy to prove that the graph $G$ is also a framework of $M_F$ and $M_L$. To determine whether $M_F=FM(G,\emptyset)$ or $M_L=LM(G,\emptyset)$, we must check the rank of each circuit-hyperplane of $G$. Since there are $2^{n-1}$ circuit-hyperplanes of $G$, there is no polynomial-time algorithm to determine whether $M_F=FM(G,\emptyset)$ or $M_L=LM(G,\emptyset)$. 

%Since each cycle of $G$ is independent in $M_F$ or $M_L$, by (3)  we have 

%\begin{lemma}\label{size}
%Each circuit of $M_F$ or $M_L$ has at least seven elements. 
%\end{lemma}

%\begin{lemma}\label{gh-1}
%Let $F\in\cF$ and $g,h\in E(G)-F$. Then there are exactly two circuits of $M_F|F\cup\{g,h\}$ or $M_L|F\cup\{g,h\}$ containing $\{g,h\}$. 
%\end{lemma}

%Since $G|E(G)-F$ is a 2-regular graph, by (4) and Lemma \ref{gh-1} we have 

Each cycle of $G_n$ in independent in $M_L^Z$ and $M_F^Z$. The next lemma
follows from this observation and inspection of the graph $G_n$. 

\begin{lemma}\label{add g and h}
\label{structure}
Let $g$ and $h$ be distinct edges in $E(G_n)-Z$.
\begin{itemize}
\item[(i)]  If $g$ and $h$ are adjacent in $G_n$, there is a partition $(P_1,P_2,P_3)$ of 
$Z$ with $|P_1|=2$ with $|P_1\cup P_2|=|P_3|=n$, and such that 
$(Z-P_i)\cup\{g,h\}$ is a circuit of $M_L^Z$ and $M_F^Z$ for each $1\leq i\leq 3$. 
%and $P_1\cup P_3\cup\{g,h\}$ is a tight handcuff of $G$; 
\item[(ii)] If $g$ and $h$ are not adjacent in $G_n$, then there is a partition 
$(P_1,P_2,P_3,P_4)$ of $Z$ with $|P_1\cup P_2|=|P_3\cup P_4|=n$ such that 
$(Z-P_i)\cup\{g,h\}$ is a circuit of $M_L^Z$ and $M_F^Z$ for each $1\leq i\leq 4$;
\item[(iii)] except the circuits in (a) and (b), there is no other circuit $C$ of 
$M_L^Z$ or $M_F^Z$ satisfying $\{g,h\}\subseteq C\subseteq Z\cup\{g,h\}$. 
\end{itemize}
\end{lemma}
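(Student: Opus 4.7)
The plan is to classify the relevant circuits of $M_L^Z$ and $M_F^Z$ by reducing to an enumeration of the simple cycles of the subgraph $G_n[Z\cup\{g,h\}]$. The first step is to reduce from $M_L^Z, M_F^Z$ back to $M_L, M_F$. By the rank-preservation lemmas of Section~2, any relaxation-induced new circuit of $M_L^Z$ has the form $Z\cup\{e\}$, which cannot contain both $g$ and $h$; and any circuit of $M_F$ strictly containing $Z$ also has the form $Z\cup\{e\}$, because $Z$ is a free basis, so the fundamental circuit of any $e\notin Z$ with respect to $Z$ is all of $Z\cup\{e\}$, and again this contains only one of $g,h$ and is irrelevant. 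Thus the circuits $C$ with $\{g,h\}\subseteq C\subseteq Z\cup\{g,h\}$ are the same in $M_L^Z$ as in $M_L$, and the same in $M_F^Z$ as in $M_F$.

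Next I would apply the standard description: circuits of $M_L(G_n,\emptyset)$ are pairs of vertex-disjoint cycles, figure-eights, and theta graphs; circuits of $M_F(G_n,\emptyset)$ add handcuffs (two vertex-disjoint cycles joined by a path) to the list. Because $C_S^1$ and $C_S^2$ are chordless, vertex-disjoint, and spanning, every non-$Z$ edge has one endpoint in each of $V(C_S^1), V(C_S^2)$; in particular every simple cycle of $G_n[Z\cup\{g,h\}]$ other than $C_S^1,C_S^2$ must use both $g$ and $h$. In case (i), where $g,h$ share an endpoint $w$ (say $w\in V(C_S^2)$), writing $g=wu, h=wu'$ with $u,u'\in V(C_S^1)$, exactly two extra cycles appear, one for each arc of $C_S^1$ from $u$ to $u'$. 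Running through all pairs of simple cycles yielding a theta or figure-eight through $\{g,h\}$ gives exactly three circuits: the theta $E(C_S^1)\cup\{g,h\}$ and the two figure-eights $E(C_S^2)\cup E(\gamma_k)$ where $\gamma_k$ uses arc $k$ of $C_S^1$; these correspond to $P_3=E(C_S^2)$ and to the two arcs of $C_S^1$ as $P_1,P_2$. In case (ii), where $g,h$ are disjoint with endpoints $u,u'\in V(C_S^1)$ and $v,v'\in V(C_S^2)$, there are four extra cycles $\delta_{ij}$ combining $g,h$ with one arc of $C_S^2$ from $v$ to $v'$ and one arc of $C_S^1$ from $u$ to $u'$; the unions $\delta_{ij}\cup\delta_{ij'}$ and $\delta_{ij}\cup\delta_{i'j}$ produce four distinct theta circuits, one for each arc of $Z$. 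Handcuffs contribute nothing in either case: the only vertex-disjoint pair of cycles in the subgraph is $(C_S^1,C_S^2)$, and no path between them can use both $g$ and $h$, since such a path would start and end on the same cycle.

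Part (iii) is then an exhaustion, since every pair of simple cycles has been accounted for and no pair of vertex-disjoint cycles contains both $g$ and $h$. The one technical point that I expect to be the main obstacle is the verification in case (i) that the two arcs of $C_S^1$ between $u$ and $u'$ have lengths exactly $2$ and $n-2$, which is what gives the partition size $|P_1|=2$. This reduces to the following local fact: at every vertex $w$ of $G_n$, the two non-$Z$ edges at $w$ have their other endpoints at distance exactly $2$ on the $Z$-cycle not containing $w$. Proving this needs a small case analysis on which of the four $G_n$-edges at $w$ lie in $Z$, combined with the explicit description of $C_S^1,C_S^2$ in terms of the crossing pairs in $S$; in each case one should be able to exhibit the common $Z$-neighbour of the two other endpoints.
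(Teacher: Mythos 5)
Your proposal is correct and is essentially the argument the paper intends: the paper gives no proof of this lemma beyond the remark that it ``follows \dots from inspection of the graph $G_n$,'' and your reduction to the circuits of $M_L$, $M_F$ followed by an enumeration of the cycles of $G_n[Z\cup\{g,h\}]$ (using that every non-$Z$ edge joins $V(C_S^1)$ to $V(C_S^2)$) is exactly that inspection carried out. Two small points. First, your summary of the circuit types is slightly misstated: the circuits of ${\rm FM}(G,\emptyset)$ are \emph{not} a superset of those of ${\rm LM}(G,\emptyset)$ --- the bare union of two vertex-disjoint cycles is a circuit of the lift matroid but is \emph{independent} in the frame matroid, where instead loose handcuffs (disjoint cycles joined by a path) become circuits; your subsequent analysis handles both correctly, so nothing breaks, but the sentence as written is wrong. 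Second, you correctly isolate the one nontrivial numerical claim, namely that in case (i) the two arcs of the cycle $C_S^j$ between the far endpoints of $g$ and $h$ have lengths $2$ and $n-2$ (equivalently, the two non-$Z$ edges at any vertex $w$ land at distance $2$ on the other covering cycle); you do not carry out the finite case check, but the claim is true --- e.g.\ for $S=\emptyset$ the far endpoints of the non-$Z$ edges at $u_i$ are $v_{i-2}$ and $v_i$, with common neighbour $v_{i-1}$ on $C_2$ --- and the paper does not verify it either, so I would not count this as a gap.
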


Since $G_n[E(G_n)-Z]$ is a 2-regular graph, by Lemma~\ref{structure}, we have 

\begin{lemma}\label{gh-2}
There are exactly $2n$ pairs of edges $g, h$ in $E(G_n)-Z$ such that there is 
a partition $(P_1,P_2,P_3)$ of $Z$ with $|P_1|=2$ with $|P_1\cup P_2|=|P_3|=n$, 
and such that $(F-P_i)\cup\{g,h\}$ is a circuit of $M_L^Z$ and $M_F^Z$ 
for each $1\leq i\leq 3$
\end{lemma}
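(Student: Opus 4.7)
The plan is to interpret the stated property of the pair $\{g,h\}$ as a purely graph-theoretic condition on the subgraph $G_n[E(G_n)-Z]$, and then carry out an elementary count. First I would argue that, by clause (iii) of Lemma~\ref{structure}, the only way for $\{g,h\}$ to admit a $3$-partition $(P_1,P_2,P_3)$ of $Z$ with $|P_1|=2$, $|P_1\cup P_2|=|P_3|=n$, and $(Z-P_i)\cup\{g,h\}$ a circuit of both $M_L^Z$ and $M_F^Z$ for each $i$ is that we are in case (i), namely that $g$ and $h$ are adjacent in $G_n$. The alternative case (ii) produces exactly four circuits indexed by a $4$-partition, and clause (iii) forbids any further circuits of the form $C\subseteq Z\cup\{g,h\}$; any attempt to merge two parts of that $4$-partition into a valid $3$-partition of the type required in Lemma~\ref{gh-2} would force one of the prescribed parts to be empty, contradicting $|P_1|=2$.

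Next I would verify that $G_n[E(G_n)-Z]$ is a spanning $2$-regular subgraph of $G_n$. Every vertex of $G_n$ is incident with exactly two edges of $C_1\cup C_2$ and exactly two edges of $C_3\cup C_4$, so $G_n$ is $4$-regular. Since $Z$ is the edge set of a covering pair of disjoint cycles, each vertex has exactly two incident $Z$-edges, hence exactly two incident non-$Z$-edges. Therefore $G_n[E(G_n)-Z]$ is $2$-regular on all $2n$ vertices of $G_n$.

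The count is then immediate: in any simple $2$-regular graph each vertex contributes exactly one unordered pair of adjacent edges, and distinct vertices contribute distinct pairs because $G_n$ has no multi-edges. Hence the number of adjacent pairs is $2n$, which is the figure asserted by Lemma~\ref{gh-2}. The only real obstacle I anticipate is the opening reduction step: ruling out that case (ii) of Lemma~\ref{structure} could incidentally satisfy the combinatorial conditions of Lemma~\ref{gh-2}. This is handled entirely by clause (iii) of Lemma~\ref{structure}, which strictly limits the circuits of the form $C\subseteq Z\cup\{g,h\}$ and leaves no room for a hybrid construction mixing the two cases.
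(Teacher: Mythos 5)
Your proposal is correct and follows essentially the same route as the paper, which simply observes that $G_n[E(G_n)-Z]$ is $2$-regular and invokes Lemma~\ref{structure}. You merely spell out the details the paper leaves implicit: that clause (iii) confines the relevant pairs to the adjacent case, and that a spanning $2$-regular simple subgraph on $2n$ vertices has exactly $2n$ adjacent edge pairs.
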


\begin{lemma}\label{G'}
Let $G'$ be a framework for $M_F^Z$ or $M_L^Z$. Then $G'$ is a $4$-regular 
graph with $2n$ vertices and without loops.
\end{lemma}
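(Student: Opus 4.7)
The plan is to combine the rank data and the circuit catalogue of $M = M_L^Z$ (respectively $M_F^Z$) with the framework axioms. Recall $r(M) = 2n$ and $|E(M)| = 4n$, and that $M$ is simple: every singleton has rank $1$ and every two-element set has rank $2$, because $G_n$ is a simple graph and the relaxation or tightening of a set of size $2n \geq 4$ does not alter the rank of any subset of size at most $2$.

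I first establish $|V(G')| \geq 2n$. Since the edge sets of distinct components of $G'$ are disjoint, submodularity of the rank function gives $r(M) \leq \sum_H r(E(H))$; combined with axiom~(2) this yields
\[
2n \;=\; r(M) \;\leq\; \sum_{H} r(E(H)) \;\leq\; \sum_{H} |V(H)| \;=\; |V(G')|.
\]

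The bulk of the work is the matching upper bound $|V(G')| \leq 2n$. The key tool is Lemma~\ref{add g and h}(i) together with Lemma~\ref{gh-2}: for each of the $2n$ adjacent pairs $(g,h)$ in $E(G_n)\setminus Z$ the matroid $M$ carries three circuits of the form $(Z - P_i) \cup \{g,h\}$ with $|P_1|=2$, $|P_2|=n-2$, $|P_3|=n$. The contrapositive of axiom~(3) asserts that if a non-loop edge $g$ at a vertex $v$ of $G'$ lies in $\cl_M(S)$, then $S$ must contain an edge incident with $v$. Applying this to each of the three circuits in turn, and comparing the resulting constraints, forces $g$ and $h$ to share a common vertex of $G'$; moreover, since different adjacent pairs in $E(G_n)\setminus Z$ involve disjoint edge-pairs, the associated shared vertices must be distinct. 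Hence the $2n$ adjacent pairs supply $2n$ distinct vertices of $G'$, giving $|V(G')|\leq 2n$ and therefore $|V(G')|=2n$.

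With the vertex count settled, I would rule out graph loops. Since $M$ is simple no edge is a matroid loop, so any putative graph loop $e$ at a vertex $v$ forces every circuit of $M$ containing $e$ to contain at least one further edge incident with $v$. Applying this to a short circuit $(Z-P_i)\cup\{g,h\}$ through $e$ contradicts the adjacent-pair vertex assignment produced above, so $G'$ has no loops. Finally, $|V(G')|=2n$, $|E(G')|=4n$ and the absence of loops give $\sum_v \deg_{G'}(v)=8n$, so the average degree is~$4$; the vertex assignment already supplies two edges $g,h$ at each of the $2n$ vertices, and a parallel analysis of the $2n$ edges of $Z$ (using its structure as the disjoint union of the $n$-cycles $C_S^1$ and $C_S^2$ in $G_n$) distributes them two to each vertex of $G'$, yielding minimum degree at least~$4$ and hence 4-regularity.

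The main obstacle is the upper bound $|V(G')|\leq 2n$: coordinating the three circuits per adjacent pair to force a shared endpoint in $G'$, and ensuring that distinct adjacent pairs yield distinct shared vertices, is the crux of the argument. A secondary difficulty is handling the case distinction between $M_L^Z$ and $M_F^Z$ (where $Z$ is respectively a basis and a circuit-hyperplane) uniformly, but since every lemma used above has the same statement for both matroids this should not present serious trouble.
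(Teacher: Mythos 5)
Your overall strategy diverges from the paper's, and the divergence introduces genuine gaps. The paper's proof is short: the vertex count $|V(G')|=2n$ is asserted from the rank ($r(M_F^Z)=r(M_L^Z)=2n$) together with the framework axioms, and then 4-regularity and looplessness follow \emph{simultaneously} from a single counting argument --- framework axiom (3) forces the set of non-loop edges at each vertex of $G'$ to contain a cocircuit, every cocircuit of $M_F^Z$ and $M_L^Z$ has at least four elements, so every vertex has at least four non-loop edge-ends; since $2|E(G')|=8n=4\cdot 2n$, there is no room for a fifth edge-end or for a loop anywhere. You never invoke the cogirth of the matroid, which is the one substantive fact the paper's proof rests on, and your separate arguments for looplessness and for minimum degree instead presuppose detailed structure of $G'$ (that the ``shared vertices'' exhaust $V(G')$ and that $Z$ and $E(G_n)-Z$ each distribute two edges to every vertex) that has not been established.

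Two specific steps are broken. First, your upper bound on $|V(G')|$ is logically inverted: exhibiting $2n$ distinct vertices of $G'$ proves $|V(G')|\geq 2n$, not $|V(G')|\leq 2n$. To get the upper bound you would need to show that \emph{every} vertex of $G'$ arises as a shared vertex of some adjacent pair, which you do not do; the paper instead relies (implicitly) on the fact from \cite{Geelen} that a component $H$ of a framework satisfies $r_M(E(H))\geq |V(H)|-1$, pinning $|V(G')|$ between $2n$ and $2n+1$ and then eliminating the latter. Second, the claim that ``different adjacent pairs in $E(G_n)\setminus Z$ involve disjoint edge-pairs'' is false: $G_n[E(G_n)-Z]$ is $2$-regular, so the $2n$ adjacent pairs are the pairs of consecutive edges around its cycles, and each edge lies in exactly two such pairs. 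Consequently your argument that the shared vertices are pairwise distinct does not stand as written. I would recommend abandoning the circuit-by-circuit reconstruction of $G'$ and instead verifying the two facts the paper actually uses: that $r(M)=2n$ forces $|V(G')|=2n$, and that the cogirth of $M_F^Z$ and $M_L^Z$ is at least $4$.
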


\begin{proof}
Evidently, $|V(G')|=2n$. Since each cocircuit in $M_F^Z$ or $M_L^Z$ has at least four elements 
and $|E(G')|=4n$, the graph $G'$ is a $4$-regular graph without loops. 
\end{proof}

Let $C$ be an even cycle of a graph $H$, and $e=uv$ be an edge in 
$E(H)-E(C)$ with $u,v\in V(C)$. If the two paths in $C$ joining $u,v$ 
have the same length, then $e$ is a {\sl bisector} of $C$. 

\begin{lemma}\label{G'|F-F}
Let $(G',\cB')$ be a biased graph satisfying $M_F^Z=FM(G',\cB')$. Then $G'[Z]$ is not a cycle. 
\end{lemma}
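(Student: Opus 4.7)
The plan is to derive a contradiction from the assumption that $G'[Z]$ is a cycle of $G'$. First observation: by Lemma \ref{G'}, $|V(G')| = 2n = |Z|$, so the cycle $G'[Z]$ is in fact Hamiltonian. Because $Z$ is a $2n$-element circuit of $M_F^Z$ we have $r_{M_F^Z}(Z) = 2n - 1$, and comparing this with the frame-matroid rank formula $r(A) = |V(A)| - b(A)$ in $(G', \mathcal{B}')$ (where $b$ counts balanced components) forces $G'[Z]$ to be a \emph{balanced} Hamilton cycle of $(G', \mathcal{B}')$.

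Second step: I would show that for each $g \in E(G_n) \setminus Z$, regarded as a chord of $G'[Z]$, the two ``chord-arc'' cycles $C'_g$ and $C''_g$ (obtained by joining $g$ with each of the two $Z$-arcs between its endpoints) must both be unbalanced. The argument is a nullity count. Since $Z$ is a hyperplane of $M_F^Z$, one has $r_{M_F^Z}(Z + g) = 2n$, so $Z + g$ has nullity $1$ and therefore contains a unique circuit, which must be $Z$; hence $g$ lies in no circuit of $M_F^Z$ contained in $Z + g$. Applying the theta property to the theta $\{Z, C'_g, C''_g\}$, in which $Z$ is balanced, $C'_g$ and $C''_g$ are either both balanced or both unbalanced; the former would produce balanced-cycle circuits containing $g$ inside $Z+g$, so both are unbalanced.

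Third step, the crux: choose a pair $g, h \in E(G_n) \setminus Z$ that is non-adjacent in $G_n$, which is possible since $G_n[E(G_n) \setminus Z]$ is $2$-regular on $2n$ vertices and thus has many non-adjacent edge pairs. By Lemma \ref{structure}, there are exactly $4$ circuits $C$ of $M_F^Z$ with $\{g, h\} \subseteq C \subseteq Z \cup \{g, h\}$. I then enumerate such circuits directly in $FM(G', \mathcal{B}')$, using the classification of frame-matroid circuits as balanced cycles, tight handcuffs, and loose handcuffs, together with the unbalanced chord-arc fact from the second step. A case split on the configuration of $g, h$ as chords of the Hamilton cycle $G'[Z]$ -- parallel, sharing exactly one endpoint, vertex-disjoint with nested endpoints, or vertex-disjoint with crossing endpoints -- will show that in each configuration at most $3$ such circuits exist, contradicting the required count of $4$.

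The main obstacle is precisely this final enumeration. For each configuration one must list all cycles of $G'[Z \cup \{g, h\}]$, decide which pairs of unbalanced cycles share exactly one vertex (tight-handcuff candidates), which pairs are vertex-disjoint (loose-handcuff candidates, for which the only possible connecting paths live in the remaining $Z$-arcs), and bound how many of the at most two ``double-chord'' cycles can be balanced, constrained by the theta property. The nested-disjoint configuration is the tightest: it produces exactly $3$ circuits (one balanced double-chord cycle plus two loose handcuffs built from the short chord-arc cycles of $g$ and $h$), while the other three configurations yield at most $2$. In every case the count falls short of $4$, which completes the contradiction.
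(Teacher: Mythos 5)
Your first two steps are correct and coincide with the opening of the paper's own proof: $|V(G')|=2n$ forces $G'[Z]$ to be a Hamilton cycle, which must lie in $\cB'$ since $Z$ is a circuit, and the unique-circuit/theta-property argument showing every chord-arc cycle is unbalanced is exactly right. The gap is in the third step. The claim that every configuration of the two chords $g,h$ admits at most three circuits $C$ with $\{g,h\}\subseteq C\subseteq Z\cup\{g,h\}$ is false. Take $g,h$ vertex-disjoint with crossing endpoints, so that the four endpoints cut $Z$ into arcs $A_1,A_2,A_3,A_4$ and $Z\cup\{g,h\}$ contains the two double-chord cycles $D_1=\{g,h\}\cup A_1\cup A_3$ and $D_2=\{g,h\}\cup A_2\cup A_4$. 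Nothing forces either $D_i$ to be balanced: every theta of $Z\cup\{g,h\}$ containing $Z$ already has exactly one balanced cycle, and the thetas avoiding $Z$ each contain two chord-arc cycles (unbalanced) plus one $D_i$, so the theta property is satisfied whether $D_i$ is balanced or not. If both $D_1$ and $D_2$ are unbalanced, then the four sets $(Z-A_i)\cup\{g,h\}$, $1\leq i\leq 4$, are contrabalanced thetas and hence all four are circuits of $FM(G',\cB')$ --- precisely the pattern of Lemma~\ref{structure}(ii) with $(P_1,P_2,P_3,P_4)=(A_1,A_2,A_3,A_4)$. The nested configuration similarly yields four circuits (two loose handcuffs through the short chord-arc cycles plus two contrabalanced thetas) when its unique double-chord cycle is unbalanced. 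So counting circuits through a single non-adjacent pair cannot separate $FM(G',\cB')$ from $M_F^Z$, and your intended contradiction evaporates.

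This is why the paper does not argue pair-by-pair on circuit counts alone. It instead uses a global count together with the \emph{sizes} of the parts: by Lemma~\ref{gh-2} exactly $2n$ pairs of non-$Z$ edges exhibit the three-circuit pattern with $|P_1|=2$ and $|P_1\cup P_2|=|P_3|=n$, while the non-$Z$ edges form a $2$-regular graph on the $2n$ vertices of $G'$ and hence give exactly $2n$ adjacent pairs of chords; matching these two collections forces, at every vertex of $G'$, one of the two incident chords to be a bisector of the Hamilton cycle $G'[Z]$ and the other to span a $4$-edge cycle with it. Chasing these bisectors produces a pair of chords whose circuit pattern violates the size constraints $|P_1\cup P_2|=|P_3\cup P_4|=n$ of Lemma~\ref{structure}(ii). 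If you want to repair your argument you must likewise bring the cardinalities $|P_i|$ into play, not merely the number of circuits.
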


\begin{proof}
Assume to the contrary that $Z$ is a cycle of $G'$. Since $|Z|=2n$, we have $V(G')=V(G'[Z])$. Since $Z\in\cC(M_F^Z)$, we have $Z\in\cB'$. Let $g,h\in E(G_n)-Z$. since $Z$ is a circuit-hyperplane of $M_F^Z$ and $G'[Z]$ is a cycle in $\cB'$, except $Z$ each cycle in $G'[Z\cup\{g\}]$ and $G'[Z\cup\{h\}]$ are not in $\cB'$. When $g,h$ are adjacent in $G'$, not assuming $g$ and $h$ adjacent in $G_n$ by Lemma \ref{add g and h} the unique cycle containing $\{g,h\}$ in $G'[Z\cup\{g,h\}]$ is not in $\cB'$; hence, there is a partition $(P_1,P_2,P_3)$ of $Z$ such that $(Z-P_i)\cup\{g,h\}$ is a circuit of $M_F^Z$ for each $1\leq i\leq 3$. Moreover, since $G'[E(G_n)-Z]$ is a 2-regular graph with exactly $2n$ vertices, by Lemma \ref{gh-2}, we have that (a) when $g,h$ are not adjacent in $G'$ there is a partition $(P_1,P_2,P_3,P_4)$ of $Z$ with $|P_1\cup P_2|=|P_3\cup P_4|=n$ and such that $(Z-P_i)\cup\{g,h\}$ is a circuit of $M_F^Z$ for each $1\leq i\leq 4$; and (b) for each $v\in V(G')$, assuming that $\{e,e'\}$ is the set of edges adjacent with $v$ but not in $Z$, the unique cycle $C$ containing $\{e,e'\}$ in $G'[Z\cup\{e,e'\}]$ has exactly four edges and either $e$ or $e'$ is a bisector of the cycle $Z$. Assume that $e'$ is a bisector of $Z$ and $u$ is the unique vertex in $C$ not adjacent with $e$ or $e'$. Let $f$ be the bisector of $Z$ adjacent with $u$. By the arbitrary choice of $v$ and (b) such $f$ exists. Since $\si(G'[Z\cup\{e,f\}])$ is a 4-edge cycle and $f$ is diameter of $Z$, (a) can not hold,  a contradiction. 
\end{proof}

Similarly (in fact, in a simpler way), we can prove 

\begin{lemma}\label{G'|F-L}
Let $(G',\cB')$ be a biased graph satisfying $M_L^Z=LM(G',\cB')$. Then $G'[Z]$ is not a cycle. 
\end{lemma}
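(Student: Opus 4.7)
The plan is to follow the template of the proof of Lemma~\ref{G'|F-F}, exploiting the fact that a lift-matroid circuit cannot be a loose handcuff, so the case analysis shortens. The argument is by contradiction: I assume $G'[Z]$ is a cycle and derive an impossibility from Lemma~\ref{add g and h}.

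First I would pin down $G'[Z]$ and the balance status of $Z$. Since $|Z|=2n=|V(G')|$, the cycle $G'[Z]$ must be Hamiltonian. Because $Z$ is a basis of $M_L^Z=LM(G',\cB')$ (obtained by relaxing the circuit-hyperplane $Z$ of $M_L$), $Z$ is independent in $LM(G',\cB')$, and so its unique cycle $G'[Z]$ must be unbalanced; i.e., $Z\notin\cB'$. By Lemma~\ref{G'}, $E(G')-Z$ is a $2$-regular spanning subgraph of $G'$, a disjoint union of cycles. For each $g\in E(G')-Z$, the subgraph $G'[Z\cup\{g\}]$ is a theta graph with cycles $Z, Z_g^1, Z_g^2$; since $Z$ is unbalanced, the theta property forces $0$ or exactly $1$ of $Z_g^1, Z_g^2$ to be balanced. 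Mirroring the opening of Lemma~\ref{G'|F-F}, I would use the circuit information on $Z\cup\{g\}$ inherited from $M_L=LM(G_n,\emptyset)$ to rule out a balanced chord cycle, forcing both $Z_g^1$ and $Z_g^2$ to be unbalanced. Consequently, the fundamental circuit of $g$ over the basis $Z$ in $LM(G',\cB')$ is the whole theta $Z\cup\{g\}$, of size $2n+1$.

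Next I would pick $g,h\in E(G_n)-Z$ and examine the three or four circuits of the form $(Z-P_i)\cup\{g,h\}$ produced by Lemma~\ref{add g and h}. Each such set must be a circuit of $LM(G',\cB')$ and so falls into only four shapes: a balanced cycle, an unbalanced theta, a tight handcuff, or two vertex-disjoint unbalanced cycles --- crucially, \emph{not} a loose handcuff. Using that $G'$ is $4$-regular with $E(G')-Z$ a $2$-factor, and that every chord cycle of $Z$ is unbalanced, I would check the prescribed sizes $|Z-P_i|+2$ (in particular $n+2$ and $2n$) against each of these four shapes, and via an analysis parallel to but simpler than the bisector argument at the end of Lemma~\ref{G'|F-F} show that no consistent assignment exists.

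The main obstacle is this final combinatorial check: simultaneously ruling out the balanced-cycle, theta, tight-handcuff, and two-disjoint-cycles realisations for the shortest circuit of size $n+2$, exploiting $4$-regularity of $G'$ and the unbalanced-chord-cycle conclusion from the previous step. The argument is strictly shorter than in Lemma~\ref{G'|F-F} precisely because loose handcuffs --- the subtle case that drove the bisector analysis there --- do not arise as circuits of $LM(G',\cB')$, matching the author's remark that the lift-matroid version can be proved ``in a simpler way''.
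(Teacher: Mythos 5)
Your setup is correct and matches the route the paper intends (the paper itself gives no separate proof of this lemma, only the remark that it follows ``similarly, in a simpler way'' from the proof of Lemma~\ref{G'|F-F}). In particular you correctly deduce that $G'[Z]$ would be an unbalanced Hamiltonian cycle (since $Z$ is a basis of $M_L^Z$, whereas in the frame case $Z$ is a circuit and hence balanced), that every chord-plus-arc cycle is unbalanced because the fundamental circuit of each $g$ over $Z$ is all of $Z\cup\{g\}$, and that the circuits $(Z-P_i)\cup\{g,h\}$ must realise one of the four lift-matroid circuit shapes, with loose handcuffs excluded. These observations are the right analogues of the opening of the proof of Lemma~\ref{G'|F-F}.

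However, there is a genuine gap: the entire content of the lemma lies in the ``final combinatorial check'' that you explicitly leave undone, and you never identify what the actual contradiction is. Knowing that loose handcuffs are not circuits shortens the case analysis but does not by itself produce an inconsistency. To finish, you must extract concrete structural consequences and collide them, in the spirit of the bisector argument: for instance, for the two chords $e,e'$ meeting a vertex $v$ (they exist since $E(G')-Z$ is a $2$-factor), the only admissible circuit pattern is two thetas and a tight handcuff whose complements are the three arcs $A_1,A_2,A_3$ determined by the endpoints of $e,e'$ on the Hamiltonian cycle, forcing $\{|A_1|,|A_2|,|A_3|\}=\{2,n-2,n\}$ by Lemma~\ref{add g and h}(i); and for vertex-disjoint chords $g,h$ the four-part pattern of Lemma~\ref{add g and h}(ii) forces the chords to cross on $Z$ and forces two of the four arcs to have lengths summing to $n$. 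It is the interplay of these numerical constraints on arc lengths (diameters, length-$2$ chords, length-$(n-2)$ chords) over all $2n$ chords that yields the contradiction, exactly where the frame proof invokes bisectors. As written, your proposal asserts that ``no consistent assignment exists'' without exhibiting the assignment constraints or the clash between them, so the proof is a plan rather than an argument; the step most likely to fail silently is the tacit assumption that size bookkeeping alone (the values $n+2$ and $2n$) rules out all four shapes, which it does not.
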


\begin{lemma}\label{non-frame}
$M_F^Z$ is a non-frame matroid. 
\end{lemma}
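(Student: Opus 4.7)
The plan is to argue by contradiction, using the structural classification of frame-matroid circuits together with the groundwork laid by Lemmas~\ref{G'}, \ref{add g and h}, \ref{gh-2}, and \ref{G'|F-F}. Assume $M_F^Z=FM(G',\mathcal{B}')$ for some biased graph $(G',\mathcal{B}')$. By Lemma~\ref{G'}, $G'$ is a loopless $4$-regular graph on $2n$ vertices. Since $Z$ is a circuit of $M_F^Z$ of size $2n$, and every circuit of a frame matroid is a balanced cycle, a tight handcuff, a loose handcuff, or a theta whose three cycles are all unbalanced, $G'[Z]$ falls into one of these four shapes. The balanced-cycle case forces $G'[Z]$ to be a single cycle of $G'$ and is ruled out by Lemma~\ref{G'|F-F}, so $|V(G'[Z])|=|Z|-1=2n-1$ and there is a unique vertex $u^*\in V(G')\setminus V(G'[Z])$. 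Since $G'$ is loopless and $4$-regular, the four $G'$-edges $g_1,g_2,g_3,g_4$ incident with $u^*$ are distinct and all lie in $E(G_n)\setminus Z$.

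For each of the three remaining shapes of $G'[Z]$, and each pair $\{g_i,g_j\}$, I would compare two counts. The matroid count, given by Lemma~\ref{add g and h}, says that the number of circuits of $M_F^Z$ contained in $Z\cup\{g_i,g_j\}$ and containing both $g_i,g_j$ is $3$ if the pair is type~(i) (adjacent in $G_n$) and $4$ if it is type~(ii); Lemma~\ref{gh-2} pins down the global supply of type~(i) pairs in $E(G_n)\setminus Z$. The biased-graph count enumerates the balanced cycles and handcuff or theta subgraphs of $G'[Z\cup\{g_i,g_j\}]$ passing through both $g_i$ and $g_j$, and is determined by the local shape of $G'[Z]$ at the endpoints of $g_i,g_j$ opposite $u^*$. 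Forcing these two counts to agree, for all $\binom{4}{2}=6$ pairs arising from $u^*$, pins the positions of the attachment vertices of $g_1,\ldots,g_4$ so tightly that at least one pair can be exhibited whose two counts disagree, producing the sought contradiction.

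The main obstacle is the case analysis: tight handcuff, loose handcuff, and theta each give a somewhat different degree pattern on $V(G'[Z])$ (a single degree-$4$ vertex, two degree-$3$ vertices joined by a path, or two degree-$3$ vertices joined by three internally disjoint paths), and within each case one still has sub-configurations according to how the four attachment vertices of $g_1,\ldots,g_4$ distribute themselves among the degree-$2$, degree-$3$, and degree-$4$ vertices of $G'[Z]$. I expect the local-structure style of the proof of Lemma~\ref{G'|F-F} to adapt to each of these cases, with $u^*$ and its four incident edges now playing the role of the distinguished vertex that drives the contradiction. Once the biased-graph circuit count disagrees with Lemma~\ref{add g and h} for some pair $\{g_i,g_j\}$ in every sub-configuration, the assumption $M_F^Z=FM(G',\mathcal{B}')$ fails and $M_F^Z$ is not a frame matroid.
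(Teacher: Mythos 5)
There is a genuine gap. You set up the contradiction correctly (Lemma~\ref{G'} gives $4$-regularity, Lemma~\ref{G'|F-F} rules out the balanced-cycle shape, so $G'[Z]$ is a theta or handcuff on $2n-1$ vertices with a unique vertex $u^*$ outside it), but from there you only extract that the \emph{four} edges at $u^*$ lie in $E(G_n)\setminus Z$, and then launch a circuit-counting case analysis over tight handcuffs, loose handcuffs and thetas whose decisive step --- ``at least one pair can be exhibited whose two counts disagree'' in every sub-configuration --- is asserted rather than carried out. As written this is a plan, not a proof: the entire burden of the argument sits in the unexecuted case analysis, and it is not evident that the six pairs at $u^*$ alone suffice to force a disagreement with Lemma~\ref{add g and h}.

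The missing idea is much simpler and makes the counting unnecessary: $Z$ is a circuit-\emph{hyperplane} of $M_F^Z$, so $\cl_{M_F^Z}(Z)=Z$ and no edge $g\in E(G_n)\setminus Z$ lies in $\cl_{M_F^Z}(Z)$. But in $FM(G',\cB')$ the subgraph $G'[Z]$ is connected and unbalanced (being a theta or handcuff that is a circuit), so \emph{every} edge of $G'$ with both endpoints in $V(G'[Z])$ belongs to $\cl_{M_F^Z}(Z)$. Hence every one of the $2n$ edges of $E(G_n)\setminus Z$ must be incident with the single vertex $u^*$, giving $\deg_{G'}(u^*)\geq 2n\geq 8>4$ and contradicting Lemma~\ref{G'} immediately. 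This is the route the paper takes; you should replace the counting programme with this one-line closure argument, or else actually complete the case analysis you outline.
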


\begin{proof}
Assume to the contrary that there is a biased graph $(G',\cB')$ such that $M_F^Z=FM(G',\cB')$.  Since $Z\in\cC(M_F^Z)$, by Lemma \ref{G'|F-F} the graph $G'[Z]$ is a theta-graph or a handcuff. Hence, $|V(G'[Z])|=2n-1$ as  $|Z|=2n$. Moreover, since $Z$ is a circuit-hyperplane of $M_F^Z$, we have $g\notin\cl_{M_F}(Z)$ for each $g\in E(G_n)-Z$; so all edges in $E(G_n)-Z$ are adjacent with the unique vertex in $V(G')-V(G'[Z])$. Hence, $G'$ is not 4-regular as $|E(G_n)-Z|=2n\geq8$, a contradiction to Lemma \ref{G'}. 
\end{proof} 

\begin{lemma}\label{non-lifted}
$M_L^Z$ is a non-lifted-graphic matroid. 
\end{lemma}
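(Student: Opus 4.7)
The plan is to follow the skeleton of the proof of Lemma~\ref{non-frame}, with one essential difference: in $M_L^Z$ the set $Z$ is a \emph{basis} (in fact a free basis) rather than a circuit, so the closure-of-$Z$ argument used there is not directly available, and we must instead exploit the fundamental circuits generated by non-$Z$ edges. Suppose for contradiction $M_L^Z = LM(G',\cB')$. By Lemma~\ref{G'}, $G'$ is a loopless $4$-regular graph on $2n$ vertices, and the identity $r(M_L^Z) = 2n = |V(G')|$ together with the $LM$ rank formula forces $G'$ to be connected and to contain an unbalanced cycle. Hence $Z$ is a spanning basis of $LM(G',\cB')$ and $G'[Z]$ is a spanning connected unicyclic subgraph of $G'$.

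The key observation is that $Z$ is a \emph{free} basis of $M_L^Z$: relaxing the circuit-hyperplane $Z$ of $M_L$ turns $\cl_{M_L}(Z-\{e\})=Z$ into $\cl_{M_L^Z}(Z-\{e\})=Z-\{e\}$ for each $e\in Z$. Consequently, for each $h\in E(G_n)-Z$, the fundamental circuit of $h$ with respect to $Z$ is exactly $Z\cup\{h\}$, a circuit of $M_L^Z$ of size $2n+1$. Since $|V(G')|=2n$, the $LM$-circuit shapes satisfying $|V|=|E|$---a balanced cycle, or a pair of vertex-disjoint unbalanced cycles---cannot accommodate $2n+1$ edges. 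Thus $G'[Z\cup\{h\}]$ must be either a tight handcuff of two unbalanced cycles or a theta with all three cycles unbalanced; in both cases every vertex of $G'[Z\cup\{h\}]$ has degree at least $2$.

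The argument finishes with a leaf count in $G'[Z]$. By Lemma~\ref{G'|F-L}, $G'[Z]$ is not a cycle, so being spanning unicyclic it has at least one leaf. For any leaf $u$ of $G'[Z]$ not incident to $h$, the vertex $u$ would still have degree $1$ in $G'[Z\cup\{h\}]$, contradicting the minimum-degree-$2$ conclusion above. Hence every leaf of $G'[Z]$ is an endpoint of $h$, forcing $G'[Z]$ to have at most two leaves. If $G'[Z]$ has a unique leaf $v$, then every non-$Z$ edge is incident to $v$; but $v$ has only $4-1=3$ non-$Z$ edges in $G'$, while $|E(G_n)-Z|=2n\geq 8$. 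If $G'[Z]$ has exactly two leaves $v,v'$, then every non-$Z$ edge is a $v$-$v'$ parallel edge, and there are at most $3$ such parallel edges at $v$. In both cases we reach a contradiction with $|E(G_n)-Z|=2n\geq 8$.

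I expect the most delicate step to be identifying $Z$ as a free basis of $M_L^Z$ and computing that every fundamental circuit has size $2n+1$; this asymmetry with the $M_F^Z$ case (where $Z$ is a circuit-hyperplane) is what dictates the switch from closure-of-$Z$ reasoning to leaf-counting on $G'[Z]$.
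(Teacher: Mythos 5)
Your proposal is correct and follows essentially the same route as the paper's proof: identify $G'[Z]$ as a spanning connected unicyclic subgraph that is not a cycle (hence has leaves), observe that every edge of $E(G')-Z$ must be incident to every leaf, and contradict $4$-regularity from Lemma~\ref{G'}. You simply make explicit the step the paper leaves implicit, namely that the fundamental circuit of each $h\notin Z$ is all of $Z\cup\{h\}$ and must therefore be a spanning theta or tight handcuff of minimum degree $2$.
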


\begin{proof}
Assume to the contrary that there is a biased graph $(G',\cB')$ satisfying $M_L^Z=LM(G',\cB')$. Since $Z$ is a basis of $M_L^Z$ and $G'[Z]$ is not a cycle by Lemma \ref{G'|F-L}, the graph $G'[Z]$ has degree-1 vertices and all edges in $E(G')-Z$ must be adjacent with all degree-1 vertices in $G'[Z]$, so $G'$ is not 4-regular, a contradiction to Lemma \ref{G'}. 
\end{proof} 

Theorems~\ref{1} and \ref{2} now follow from the fact that $G_n$ has exponentially
many covering pairs of disjoint chordless cycles, that the matroids obtained by
relaxing the circuit-hyperplane of LM$(G_n,\emptyset)$ or tightening the free basis
of FM$(G_n,\emptyset)$ associated with any one of these chordless cycles is quasi-graphis,
and Lemmas~\ref{non-lifted} and \ref{non-frame} which show that these matroids are 
respectively not lifted-graphic and not frame matroids.

\end{document}